\def\th@plain{%
  \upshape 
}
\renewenvironment{proof}[1][\proofname]{\par
  \pushQED{\qed}%
  \normalfont \topsep6\p@\@plus6\p@\relax
  \trivlist
  \item[\hskip\labelsep
        \bfseries
    #1\@addpunct{.}]\ignorespaces
}{%
  \popQED\endtrivlist\@endpefalse
}
\newtheorem{theorem}{Theorem}
\numberwithin{theorem}{section}
\newtheorem{lemma}{Lemma}
\newtheorem{corollary}{Corollary}
\newtheorem*{conjecture*}{Conjecture}
\newtheorem{claim}{Claim}
\theoremstyle{definition}
\newcounter{Hcase}
\newcounter{Hclaim}
\newcommand{\etal}{et~al.\ }
\def\int(#1){\mathrm{int}(#1)}
\def\ext(#1){\mathrm{ext}(#1)}
\def\Int(#1){\mathrm{Int}(#1)}
\def\Ext(#1){\mathrm{Ext}(#1)}
\def\mad(#1){\mathrm{mad}(#1)}
\def\la(#1){\mathrm{la}(#1)}
\newcommand{\Lfloor}{\left\lfloor}
\newcommand{\Rfloor}{\right\rfloor}
\newtheorem*{TCC}{Total Coloring Conjecture}
\begin{document}%
\title{Minimal counterexamples and discharging method}
\author{Tao Wang\footnote{{\tt Corresponding
author: wangtao@henu.edu.cn}}\\
{\small Institute of Applied Mathematics}\\
{\small College of Mathematics and Information Science}\\
{\small Henan University, Kaifeng, 475004, P. R. China}}
\date{March 2, 2014}
\maketitle

\begin{abstract}%
Recently, the author found that there is a common mistake in some papers by using minimal counterexample and discharging method. We first discuss how the mistake is generated, and give a method to fix the mistake. As an illustration, we consider total coloring of planar or toroidal graphs, and show that: if $G$ is a planar or toroidal graph with maximum degree at most $\kappa - 1$, where $\kappa \geq 11$, then the total chromatic number is at most $\kappa$.   
\end{abstract}
\section{Introduction}

A graph property $\mathcal{P}$ is {\em deletion-closed} if $\mathcal{P}$ is closed under taking subgraphs. We denote the minimum degree and maximum degree of a graph $G$ by $\delta(G)$ and $\Delta(G)$, respectively. We denote $\xi(G)$ a parameter of $G$, such as total chromatic number, list chromatic index, list total chromatic number, and so on. We denote $\zeta(G)$ a function of $\Delta(G)$, and denote $\lambda_{1}, \lambda_{2}, \kappa$ positive integers. Most of the results regarding planar graphs or toroidal graphs were proved by taking a minimal counterexample and using discharging method. Recently, the author found that there are many papers investigated results in the following form.

\begin{enumerate}[label = ($\ast$)]%
\item Let $G$ be a planar or toroidal graph with deletion-closed property $\mathcal{P}$. If $\Delta(G) \geq \lambda_{1}$, then $\xi(G) \leq \zeta(G)$. \qed
\end{enumerate}
In the proof, they wrote ``Let $G$ be a minimal counterexample. By the minimality of $G$, we have that $\xi(G - e) \leq \zeta(G)$.'' But something has been ignored, thus the argument is wrong because we cannot guarantee $\Delta(G - e) \geq \lambda_{1}$, that is, the condition $\Delta(G) \geq \lambda_{1}$ is not deletion-closed, so we cannot use the minimality of $G$. Therefore, some researchers changed to prove the corresponding results in the following form. 

\begin{enumerate}[label = ($\diamondsuit$)]%
\item Let $G$ be a planar or toroidal graph with deletion-closed property $\mathcal{P}$. If the maximum degree is at most $\lambda_{2}$, where $\lambda_{2} \geq \lambda_{1}$, then $\xi(G) \leq \zeta(G)$. \qed
\end{enumerate}

Hence, most of proofs about planar graphs can be fixed by changing the statement to the above form ($\diamondsuit$). But for the toroidal graphs, most of the proofs cannot be fixed even you adopt the above form ($\diamondsuit$). In the proof, to derive a contradiction, after the discharging process, we need to show that at least one element (vertex/face) has positive final charge. The common doing is to show the final charge of $\lambda_{2}$-vertex is positive, but maybe $\Delta(G) < \lambda_{2}$ and there is no $\lambda_{2}$-vertex.

Until now, the author found the results in \cite{MR1724810, MR3146839, MR2679093, MR3146527, MR2464909, MR1781294, MR2155783, MR2907325} and the corollaries in \cite{MR2458420, MR2552636, MR2277573, MR2475012, MR2118301} are wrong. To the author's knowledge, the earliest paper having this problem is Zhao's paper \cite{MR1724810} on total coloring, thus we only consider the total coloring problem. 

A {\em total coloring} of a graph $G$ is an assignment of colors to the vertices and edges of $G$ such that every pair of adjacent/incident elements receive distinct colors. The {\em total chromatic number} of a graph $G$, denoted by $\chiup''(G)$, is the minimum number of colors in a total coloring of $G$. It is obvious that the total chromatic number of a graph $G$ has a trivial lower bound $\Delta(G) + 1$. For the upper bound, Behzad \cite{Behzad1965} raised the following well-known Total Coloring Conjecture (TCC):

\begin{TCC}
Every graph with maximum degree $\Delta$ admits a total coloring with at most $\Delta + 2$ colors.
\end{TCC}

The conjecture was verified in the case $\Delta =3$ by Rosenfeld \cite{MR0278995} and Vijayaditya \cite{MR0285447} independently and also by Yap \cite{MR0976059}. It was confirmed in the case $\Delta \in \{4, 5\}$ by Kostochka \cite{MR0453576, MR1425788}, in fact the proof holds for multigraphs. Regarding planar graphs, the conjecture was verified in the case $\Delta \geq 9$ by Borodin \cite{MR977440} and in the case $\Delta = 7$ by Sanders and Zhao \cite{MR1684286}; the case $\Delta = 8$ was a consequence of Vizing's theorem about planar graphs \cite{Vizing1965} and four coloring theorem (for more details, see Jensen and Toft \cite{MR1304254}). Thus, the only remaining case for planar graphs is that of maximum degree six. Note that best known upper bound on the total chromatic number of planar graph with maximum degree $6$ is $9$ \cite{MR977440}. 

For planar graphs with large maximum degree, the total chromatic number can be obtained. Precisely, Borodin \cite{MR977440} showed that if $\Delta \geq 14$ then $\chiup''(G) = \Delta(G) + 1$. Borodin, Kostochka and Woodall improved the result to the case $\Delta \geq 12$ \cite{MR1483474} and $\Delta = 11$ \cite{MR1464341}. Recently, Wang \cite{MR2285452} further improved the result for $\Delta = 10$, and Kowalik \etal \cite{MR2448906} improved the result for $\Delta = 9$.

In section~\ref{Sec:structure}, we give some structural results which are very helpful in the proof of total coloring problem. In section~\ref{Sec:torus}, we give an illustration how to prove the statement in the revised form ($\diamondsuit$).

\section{Total $\kappa$-coloring}\label{Sec:structure}
A {\em $\kappa$-deletion-minimal} graph with respect to total coloring, is a graph with maximum degree at most $\kappa - 1$ such that its total chromatic number is greater than $\kappa$, but the total chromatic number of every proper subgraph is at most $\kappa$. In this section, we give many structural results on $\kappa$-deletion-minimal graph $G$, most of which can be obtained by trivially extending the corresponding proofs in other papers. Note that some of the results in this section may be not used in section~3, and we just collect as many results as possible. All the solid black dots are only incident with the edges depicted in the figures. 

Usually, we first give a partial total coloring of $G$, and then we extend the coloring to $G$ in the proof. Since an uncolored vertex with degree at most $\Lfloor \frac{\kappa - 1}{2} \Rfloor$ forbids at most $2\deg \leq 2\Lfloor \frac{\kappa - 1}{2} \Rfloor \leq \kappa - 1$ colors, so we always have at least one available color for the vertex, thus we will not care about the coloring of the vertices with degree at most $\Lfloor \frac{\kappa - 1}{2} \Rfloor$. 

A vertex of degree $\tau$, at most $\tau$ and at least $\tau$ are called a $\tau$-vertex, $\tau^{-}$-vertex and $\tau^{+}$-vertex, respectively. Let $[\kappa]$ denote the set $\{1, 2, \dots, \kappa\}$. We denote $\mathcal{U}(w)$ the set of colors which are assigned to the vertex $w$ and edges incident with $w$. 

\begin{lemma}\label{2-connected}%
The graph $G$ is $2$-connected. 
\end{lemma}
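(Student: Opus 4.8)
The plan is to prove 2-connectivity by contradiction, establishing two things: that $G$ is connected, and that $G$ has no cut vertex. Throughout, I will exploit the defining property of a $\kappa$-deletion-minimal graph: every proper subgraph has total chromatic number at most $\kappa$, while $G$ itself does not. The guiding principle is that if $G$ could be decomposed along a disconnection or a cut vertex into pieces that are proper subgraphs, then each piece is totally $\kappa$-colorable, and I would try to glue these colorings together into a total $\kappa$-coloring of $G$, contradicting minimality.

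First I would argue connectivity. Suppose $G$ is disconnected with components $G_1, G_2, \dots$; each component is a proper subgraph (note $G$ must have at least one edge, since an edgeless graph is trivially $1$-colorable, so at least one component is nontrivial). By minimality each $G_i$ admits a total $\kappa$-coloring, and since the components share no vertices or edges, the disjoint union of these colorings is a valid total $\kappa$-coloring of $G$ — adjacency and incidence constraints only ever involve elements within a single component. This already contradicts that $G$ is not totally $\kappa$-colorable, so $G$ is connected.

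Next I would rule out a cut vertex. Suppose $v$ is a cut vertex, so that $G = G_1 \cup G_2$ where $G_1$ and $G_2$ are subgraphs sharing only the vertex $v$, with $E(G_1) \cap E(G_2) = \varnothing$ and each $G_i$ a proper subgraph of $G$. Crucially, $\deg_{G_1}(v) + \deg_{G_2}(v) = \deg_G(v) \leq \kappa - 1$, since $G$ has maximum degree at most $\kappa - 1$. By minimality, fix a total $\kappa$-coloring $\phi_i$ of each $G_i$. The vertex $v$ and its incident edges are the only elements shared or in conflict across the two pieces, so I would first permute colors within $\phi_2$ (a global recoloring by a permutation of $[\kappa]$ preserves validity) so that $\phi_2(v) = \phi_1(v)$; this handles the vertex itself. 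The remaining obstacle is the edges at $v$: the edges of $G_1$ incident with $v$ use some set $S_1 \subseteq [\kappa]$ of colors and the edges of $G_2$ incident with $v$ use some set $S_2$, and I need $S_1 \cap S_2 = \varnothing$ together with the color of $v$ avoided by both. Since $|S_1| + |S_2| = \deg_G(v) \leq \kappa - 1$ and the color $\phi_1(v)$ is excluded from both $S_1$ and $S_2$, a counting argument shows there is enough room: I would further permute the colors of $\phi_2$ on $[\kappa] \setminus \{\phi_1(v)\}$ so that $S_2$ is disjoint from $S_1$, which is possible precisely because $|S_1| + |S_2| \leq \kappa - 1 = |[\kappa] \setminus \{\phi_1(v)\}|$. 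Merging the adjusted colorings then yields a total $\kappa$-coloring of $G$, a contradiction.

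The main obstacle is the edge-recoloring step at the cut vertex: I must verify that the permutation applied to $\phi_2$ can simultaneously fix $\phi_2(v) = \phi_1(v)$ and make the edge-color sets at $v$ disjoint, without introducing new conflicts elsewhere in $G_2$. The key observation that makes this work is that applying a single global permutation of the palette to an entire valid coloring keeps it valid, so all adjustments are ``free'' away from $v$; the degree bound $\deg_G(v) \leq \kappa - 1$ then guarantees the palette is large enough to separate the two edge-color sets. I expect the statement and its proof to be routine modulo this counting, and it is the kind of result the paper describes as ``trivially extending the corresponding proofs in other papers.''
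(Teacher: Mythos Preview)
The paper states this lemma without proof, treating it as standard; your argument is correct and is precisely the folklore proof one would expect here. The only point worth making explicit is that after the second permutation (the one on $[\kappa]\setminus\{\phi_1(v)\}$) no new conflicts can arise between neighbors of $v$ lying in different $G_i$, since such vertices are in distinct components of $G-v$ and hence nonadjacent --- but you implicitly rely on this and it is immediate.
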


\begin{lemma}[Wang \cite{2012arXiv1206.3862W}]\label{DEDGE}%
If $u$ and $v$ are two adjacent vertices with $\deg_{G}(v) \leq \Lfloor \frac{\kappa - 1}{2} \Rfloor$, then $\deg_{G}(u) + \deg_{G}(v) \geq \kappa + 1$.
\end{lemma}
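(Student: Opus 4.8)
The plan is to argue by contradiction, exploiting $\kappa$-deletion-minimality together with the slack afforded by the low-degree vertex $v$. Suppose to the contrary that $\deg_{G}(u) + \deg_{G}(v) \leq \kappa$, and write $d_{u} = \deg_{G}(u)$ and $d_{v} = \deg_{G}(v)$. I would delete the edge $uv$ to obtain the proper subgraph $G - uv$; by the definition of a $\kappa$-deletion-minimal graph, $G - uv$ admits a total coloring $\phi$ with colors from $[\kappa]$. It then suffices to promote $\phi$ to a total $\kappa$-coloring of the whole graph $G$, which contradicts $\chiup''(G) > \kappa$.

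The naive attempt---simply coloring the single uncolored edge $uv$ under $\phi$---fails precisely in the boundary case $d_{u} + d_{v} = \kappa$, since $\phi(u)$, $\phi(v)$, the $d_{u} - 1$ remaining edges at $u$, and the $d_{v} - 1$ remaining edges at $v$ may together forbid as many as $d_{u} + d_{v} = \kappa$ colors, leaving none available. The key device is to first erase the color $\phi(v)$ from the vertex $v$ before coloring $uv$. Once $v$ is uncolored, the colors forbidden for $uv$ lie in $\mathcal{U}(u) \cup \mathcal{U}(v) \setminus \{\phi(uv)\}$ with $v$ contributing no vertex color, so they number at most $(d_{u} - 1) + (d_{v} - 1) + 1 = d_{u} + d_{v} - 1 \leq \kappa - 1$; hence a color from $[\kappa]$ remains available for $uv$.

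After $uv$ has been colored, I would recolor $v$ last. At this stage every edge incident with $v$ is colored, and the elements in conflict with $v$ are its $d_{v}$ incident edges together with its $d_{v}$ neighbors, giving at most $2 d_{v} \leq 2 \Lfloor \frac{\kappa - 1}{2} \Rfloor \leq \kappa - 1$ forbidden colors---exactly the slack guaranteed for a vertex of degree at most $\Lfloor \frac{\kappa - 1}{2} \Rfloor$ observed in the preamble. Thus $v$ can be assigned a color from $[\kappa]$, producing a total $\kappa$-coloring of $G$ and the desired contradiction.

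I expect the only genuinely delicate point to be the bookkeeping of forbidden colors in the tight case $d_{u} + d_{v} = \kappa$: the entire argument hinges on recognizing that deferring the coloring of the low-degree endpoint $v$ frees up exactly one extra color for the edge $uv$, while the inequality $2 d_{v} \leq \kappa - 1$ ensures $v$ can always be reinserted afterwards. Everything else reduces to routine counting of the sets $\mathcal{U}(u)$ and $\mathcal{U}(v)$.
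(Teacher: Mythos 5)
Your proof is correct and is exactly the argument this lemma rests on: the paper itself gives no proof (it cites Wang's arXiv paper), but your steps---delete $uv$, erase the color on $v$ to gain one unit of slack for coloring the edge, then recolor $v$ using $2\deg_{G}(v) \leq 2\Lfloor \frac{\kappa-1}{2}\Rfloor \leq \kappa - 1$---are precisely the standard reduction, and they match the paper's own preamble remark that uncolored vertices of degree at most $\Lfloor \frac{\kappa-1}{2}\Rfloor$ can always be colored last. No gaps; the counting in the tight case $\deg_{G}(u)+\deg_{G}(v)=\kappa$ is handled correctly.
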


\begin{lemma}
The minimum degree is at least $\kappa - \Delta(G) + 1$. 
\end{lemma}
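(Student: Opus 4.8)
The plan is to argue by contradiction: suppose $\delta(G) + \Delta(G) \le \kappa$, fix a vertex $v$ with $\deg_{G}(v) = \delta(G)$, and split according to the size of $\delta(G)$. The case of small minimum degree costs nothing. Indeed, if $\delta(G) \le \lfloor \frac{\kappa-1}{2} \rfloor$, then Lemma~\ref{DEDGE} applies to $v$ and any neighbour $u$, giving $\deg_{G}(u) + \deg_{G}(v) \ge \kappa + 1$; since $\deg_{G}(u) \le \Delta(G)$ this yields $\delta(G) + \Delta(G) \ge \kappa + 1$, contradicting the assumption. So no colouring argument is needed in this range.

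Next I would treat large minimum degree, $\delta(G) \ge \lfloor \frac{\kappa-1}{2} \rfloor + 1$, by pure arithmetic. Using $\Delta(G) \ge \delta(G)$ we get $\delta(G) + \Delta(G) \ge 2\delta(G)$. When $\kappa$ is odd the hypothesis forces $\delta(G) \ge \frac{\kappa+1}{2}$, hence $2\delta(G) \ge \kappa + 1$, a contradiction; when $\kappa$ is even it gives $\delta(G) \ge \frac{\kappa}{2}$ and therefore $\delta(G) + \Delta(G) \ge \kappa$, with equality only if $\delta(G) = \Delta(G) = \frac{\kappa}{2}$. Thus every configuration collapses except the single rigid one in which $\kappa$ is even and $G$ is $\frac{\kappa}{2}$-regular, and that is where the genuine work lies.

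For this remaining case I would delete an edge $uv$ (both endpoints have degree $\frac{\kappa}{2}$; Lemma~\ref{2-connected} guarantees such an edge with well-defined neighbourhoods), take a total $\kappa$-colouring of $G - uv$ supplied by minimality, and try to colour $uv$. The edge $uv$ is blocked only if the $\kappa - 2$ colours appearing on the edges at $u$ and $v$, together with $c(u)$ and $c(v)$, exhaust $[\kappa]$; writing $F_u$ and $F_v$ for the two sets of edge-colours, this says $F_u \sqcup F_v \sqcup \{c(u), c(v)\} = [\kappa]$. The key observation is that recolouring $v$ by any colour other than $c(v)$ immediately frees $c(v)$ for $uv$, so it suffices to exhibit a second colour admissible at $v$.

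The main obstacle is precisely the tightness forced by $2\Delta(G) = \kappa$: the vertex $v$ already sees $\frac{\kappa}{2} - 1$ edge-colours, $\frac{\kappa}{2} - 1$ colours on its neighbours other than $u$, and the colour $c(u)$, and these can sum to $\kappa - 1$, leaving $c(v)$ as the unique admissible colour. I would show that this extremal situation forces the rigid identity that the colours of the neighbours of $v$ other than $u$ coincide with $F_u$, and symmetrically at $u$. Breaking this rigidity is the crux; I expect to do so either by a Kempe-type recolouring along a suitable two-coloured chain through $u$ and $v$, or by selecting, among all total $\kappa$-colourings of $G - uv$, one that avoids the rigid pattern, and finally by playing $\chiup''(G) > \kappa$ against the elementary greedy bound $\chiup''(G) \le 2\Delta(G) + 1$. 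Once this last regular configuration is eliminated, the statement $\delta(G) \ge \kappa - \Delta(G) + 1$ follows, with everything outside the crux being routine bookkeeping.
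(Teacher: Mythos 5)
Your reduction is correct and is the natural one: the case $\delta(G) \le \lfloor\frac{\kappa-1}{2}\rfloor$ follows from \autoref{DEDGE} exactly as you say, the odd-$\kappa$ arithmetic is fine, and the only surviving configuration is $\kappa = 2\tau$ with $G$ a $\tau$-regular graph. (The paper states this lemma without proof, as one of the facts obtained by routinely extending arguments from the literature; the routine part is precisely what you did.) The problem is that you never actually dispose of the regular case: you correctly isolate the rigid coloring pattern, but then only list hopes --- a Kempe-chain recoloring, or choosing a ``better'' total coloring of $G - uv$, or the greedy bound $\chi''(G) \le 2\Delta(G)+1$. None of these is carried out, and the last one cannot work: for a $\tau$-regular graph it gives $\chi''(G) \le 2\tau + 1 = \kappa + 1$, off by exactly one from what you need. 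Worse, no purely local recoloring argument can succeed without some lower bound on $\kappa$: for $\kappa = 2$ the graph $K_2$ is $2$-deletion-minimal (it has $\Delta = 1 = \kappa - 1$, $\chi''(K_2) = 3 > 2$, and every proper subgraph is totally $2$-colorable), it is $1$-regular with $\delta + \Delta = \kappa$, and the lemma itself fails for it; there is nothing to recolor. So the crux you flagged is a genuine gap, not bookkeeping.

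The way to close it is global, not local: a $\tau$-regular graph has maximum degree $\tau = \kappa/2$, and one invokes absolute upper bounds on the total chromatic number. By \autoref{UpperBound} (McDiarmid and S{\'a}nchez-Arroyo), such a graph satisfies $\chi''(G) \le \frac{7}{5}\cdot\frac{\kappa}{2} + 3 \le \kappa$ as soon as $\kappa \ge 10$; for even $\kappa \in \{4, 6, 8\}$ one instead uses the known cases $\Delta \le 5$ of the Total Coloring Conjecture (Rosenfeld, Vijayaditya, Kostochka), giving $\chi''(G) \le \frac{\kappa}{2} + 2 \le \kappa$. Hence for every $\kappa \ge 4$ a $\frac{\kappa}{2}$-regular graph has total chromatic number at most $\kappa$ and therefore cannot be $\kappa$-deletion-minimal, which eliminates your remaining configuration outright; in the context of this paper ($\kappa \ge 11$) \autoref{UpperBound} alone suffices. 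This is the same mechanism the paper uses at the start of its main proof (``By \autoref{UpperBound}, we assume that $\Delta(G) \geq 7$''), and it --- not a rigidity-breaking recoloring --- is the ingredient missing from your argument.
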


\begin{lemma}%
If $\kappa \geq 5$, then the subgraph induced by the edges incident with $2$-vertices is a forest.
\end{lemma}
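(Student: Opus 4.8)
The plan is to argue by contradiction. Suppose the subgraph $H$ induced by the edges incident with $2$-vertices contains a cycle $C$. First I would pin down the structure of $C$ using Lemma~\ref{DEDGE}. Since $\kappa \geq 5$, every $2$-vertex $v$ has $\deg_{G}(v) = 2 \leq \lfloor \frac{\kappa-1}{2} \rfloor$, so Lemma~\ref{DEDGE} forces every neighbour $u$ of $v$ to satisfy $\deg_{G}(u) \geq \kappa - 1$; combined with $\Delta(G) \leq \kappa - 1$ this gives $\deg_{G}(u) = \kappa - 1$. In particular two $2$-vertices cannot be adjacent, since that would require $2 + 2 \geq \kappa + 1$, i.e.\ $\kappa \leq 3$.

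Next I would use this to show that $C$ is an \emph{even} cycle that strictly alternates between $2$-vertices and $(\kappa-1)$-vertices. Every edge of $C$ lies in $H$, hence has a $2$-vertex endpoint; since no two $2$-vertices are adjacent, the $2$-vertices of $C$ form an independent set on the cycle, and consequently each non-$2$-vertex of $C$ must have both of its cycle-neighbours being $2$-vertices. Hence the two vertex types alternate around $C$, so $|C|$ is even and every non-$2$-vertex on $C$ has degree exactly $\kappa - 1$.

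Now I would invoke minimality. Deleting the edge set $E(C)$ yields a proper subgraph, which by $\kappa$-deletion-minimality admits a total $\kappa$-colouring $\phi$; I then erase the colours on the $2$-vertices of $C$. The goal is to extend $\phi$ by colouring the edges of $C$ and recolouring its $2$-vertices. Each $(\kappa-1)$-vertex $u$ on $C$ retains $\kappa - 3$ coloured edges, so $\mathcal{U}(u)$ has size at most $(\kappa-3)+1 = \kappa - 2$ and leaves at least two colours free for the two cycle-edges incident with $u$, while each $2$-vertex is currently uncoloured with no coloured incident edge. Assigning to every edge of $C$ the list of colours permitted by its unique $(\kappa-1)$-vertex endpoint therefore gives a list edge-colouring instance on $C$ in which every list has size at least $2$. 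Since the line graph of a cycle is the cycle itself and even cycles are $2$-choosable, the edges of $C$ can be properly coloured from these lists. Finally each $2$-vertex sees only its two (distinct) cycle-edge colours, so it has at least $\kappa - 2 \geq 3$ available colours and can be coloured. This extends $\phi$ to a total $\kappa$-colouring of $G$, contradicting $\chiup''(G) > \kappa$.

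The main obstacle is establishing the \emph{parity} of $C$: the extension step succeeds only because even cycles are $2$-edge-choosable, whereas odd cycles would require lists of size $3$. Thus the crux is the alternation argument, which in turn hinges on the degree constraint from Lemma~\ref{DEDGE} forbidding adjacent $2$-vertices; once $C$ is known to be even with all non-$2$-vertices of degree $\kappa - 1$, the colouring extension is routine.
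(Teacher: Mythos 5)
Your proposal is correct and follows essentially the same route as the paper: use \autoref{DEDGE} to deduce that the $2$-vertices are independent, so any cycle $C$ in the edge-induced subgraph alternates between $2$-vertices and $(\kappa-1)$-vertices and is therefore even, then delete $E(C)$, invoke minimality, and extend the colouring via the $2$-edge-choosability of even cycles. One trivial miscount: an uncoloured $2$-vertex on $C$ is also constrained by the vertex colours of its two $(\kappa-1)$-neighbours, not only by its two incident edge colours, so it has at least $\kappa - 4 \geq 1$ available colours rather than $\kappa - 2$; this does not affect the validity of the argument.
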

\begin{proof}
Firstly, by \autoref{DEDGE}, the set of $2$-vertices is independent and the edge induced subgraph is bipartite. Suppose that it contains a cycle $C$. By the minimality of $G$, the graph $G - E(C)$ admits a total coloring $\varphi$ with at most $\kappa$ colors. We can extend $\varphi$ to $G$ by using the known result that every even cycle is $2$-edge-choosable, which leads to a contradiction.  
\end{proof}

\begin{lemma}\label{SumDelta3}%
Let $u$ and $v$ be two adjacent vertices with $\deg_{G}(v) \leq \Lfloor \frac{\kappa - 1}{2} \Rfloor$ and $\deg_{G}(u) + \deg_{G}(v) \leq \kappa + 1$. If $uv$ is contained in a triangle $uvw$, then $\deg_{G}(w) = \kappa - 1$.
\end{lemma}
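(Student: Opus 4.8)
The plan is to argue by contradiction: suppose $\deg_{G}(w) \le \kappa - 2$ and show that $G$ would then admit a total $\kappa$-coloring, contradicting its $\kappa$-deletion-minimality. First I would delete the edge $uv$; by minimality the graph $G - uv$ has a total coloring $\varphi$ with colors from $[\kappa]$. I then uncolor the vertex $v$, so that only $v$ and the edge $uv$ remain to be colored. Writing $E_{u}$ and $E_{v}$ for the sets of colors that $\varphi$ assigns to the edges incident with $u$ and with $v$ other than $uv$, the colors forbidden on $uv$ are exactly $\{\varphi(u)\} \cup E_{u} \cup E_{v}$, a set of size at most $1 + (\deg_{G}(u) - 1) + (\deg_{G}(v) - 1) = \deg_{G}(u) + \deg_{G}(v) - 1 \le \kappa$ by the hypothesis $\deg_{G}(u)+\deg_{G}(v)\le\kappa+1$. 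If this set has fewer than $\kappa$ colors, I color $uv$ with a free color and then recolor $v$; the latter is always possible because $v$ forbids at most $2\deg_{G}(v) \le \kappa - 1$ colors. This already yields a total $\kappa$-coloring of $G$, the desired contradiction.

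It remains to treat the tight case in which the forbidden set is all of $[\kappa]$. Then $\{\varphi(u)\}$, $E_{u}$, $E_{v}$ are pairwise disjoint, so in particular $a := \varphi(uw)$ is the only edge at $u$ with color $a$ and lies in $E_{u}$, while $b := \varphi(vw)$ is the only edge at $v$ with color $b$ and lies in $E_{v}$. Here is where $\deg_{G}(w) \le \kappa - 2$ enters: since $\mathcal{U}(w)$ has at most $\kappa - 1$ colors, there is a color $\gamma \in [\kappa] \setminus \mathcal{U}(w)$ missing at $w$, and $\gamma \ne a, b, \varphi(w)$. If $\gamma \in E_{v}$, then $\gamma \notin E_{u}$ and $\gamma \ne \varphi(u)$, so I may recolor the edge $uw$ with $\gamma$: this is valid at $w$ because $\gamma$ is missing there, and valid at $u$ because $\gamma$ is absent from $\{\varphi(u)\} \cup E_{u}$. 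This frees the color $a$, which I assign to $uv$. Otherwise $\gamma \in E_{u} \cup \{\varphi(u)\}$, hence $\gamma \notin E_{v}$, and symmetrically I recolor $vw$ with $\gamma$ and assign the freed color $b$ to $uv$. In either case I finish by recoloring $v$, again possible since $2\deg_{G}(v) \le \kappa - 1$, producing a total $\kappa$-coloring of $G$ and the final contradiction.

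The step that must be checked carefully is that rerouting $\gamma$ onto a triangle edge creates no new conflict: the edge at $u$ (respectively $v$) that already carries $\gamma$ is not incident with $w$ (because $\gamma \ne a, b$), so it does not share a vertex with the recolored edge $uw$ (respectively $vw$), and $\varphi(u)$ never lies on an edge at $u$; verifying these few incidences, together with $a \ne \gamma$ and $b \ne \gamma$, is routine. Conceptually the only delicate points are recognizing that the hypothesis $\deg_{G}(u) + \deg_{G}(v) \le \kappa + 1$ forces the forbidden set on $uv$ to have size at most $\kappa$ (so the bad case is a genuine partition of $[\kappa]$), and that $\deg_{G}(w) \le \kappa - 2$ is precisely what supplies the missing color $\gamma$ needed to break that partition. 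I would not expect this to require Kempe chains or any recoloring of elements incident with the possibly high-degree vertex $u$; recoloring a single triangle edge suffices.
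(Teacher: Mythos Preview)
Your argument is correct and follows the same overall strategy as the paper: delete $uv$, uncolor $v$, observe that in the tight case the colors $\{\varphi(u)\}$, $E_u$, $E_v$ partition $[\kappa]$, and then use $\deg_G(w)\le\kappa-2$ to find a color $\gamma$ missing at $w$ with which to recolor a triangle edge and free a color for $uv$.

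The only tactical difference is in the last step. The paper inserts an intermediate move: it shifts $\varphi(vw)$ onto $uv$ and erases the color on $vw$, then argues (by the same ``forbidden set equals $[\kappa]$'' reasoning applied to the edge $vw$) that $\mathcal{U}_\varphi(u)\subseteq\mathcal{U}_\varphi(w)$. This containment guarantees that the missing color at $w$ is automatically missing at $u$, so the paper always recolors $uw$ and never needs your case split. Your version bypasses that containment argument by simply splitting on whether $\gamma\in E_v$ or $\gamma\in\{\varphi(u)\}\cup E_u$ and recoloring $uw$ or $vw$ accordingly. Both routes are equally short; yours is arguably more direct, while the paper's extracts the slightly stronger structural fact $\mathcal{U}_\varphi(u)\subseteq\mathcal{U}_\varphi(w)$ along the way.
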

\begin{proof}%
  By contradiction, suppose that $\deg_{G}(w) \leq \kappa -2$. By the minimality of $G$, the graph $G - uv$ admits a total coloring with at most $\kappa$ colors. Now, we erase the color on the vertex $v$, and denote the resulting coloring by $\varphi$. If $\{1, 2, \dots, \kappa\}$ is not the union of $\mathcal{U}_{\varphi}(u)$ and $\mathcal{U}_{\varphi}(v)$, then we can extend the coloring $\varphi$ to $uv$. Hence, the set $\{1, 2, \dots, \kappa\}$ is the union of $\mathcal{U}_{\varphi}(u)$ and $\mathcal{U}_{\varphi}(v)$; in fact, it is the disjoint union of $\mathcal{U}_{\varphi}(u)$ and $\mathcal{U}_{\varphi}(v)$ since $|\mathcal{U}_{\varphi}(u)| + |\mathcal{U}_{\varphi}(v)| = \kappa$. Note that $\varphi(wv) \notin \mathcal{U}_{\varphi}(u)$. Let $\phi$ be the coloring from $\varphi$ by assigning the color $\varphi(wv)$ to $uv$ and erasing the color on $wv$. Similarly, we can prove that $\{1, 2, \dots, \kappa\}$ is the union (not necessarily disjoint union) of $\mathcal{U}_{\phi}(w)$ and $\mathcal{U}_{\varphi}(v)$. Therefore, we have $\mathcal{U}_{\varphi}(u) \subseteq \mathcal{U}_{\phi}(w) \subset \mathcal{U}_{\varphi}(w)$. Since $\deg_{G}(w) \leq \kappa - 2$, it follows that there exists a color $\alpha \notin \mathcal{U}_{\varphi}(u) \cup \mathcal{U}_{\varphi}(w)$. Note that $\varphi(uw) \notin \mathcal{U}_{\varphi}(v)$. We extend $\varphi$ by assigning $\alpha$ to $uw$ and assigning $\varphi(uw)$ to $uv$. 
\end{proof}

\begin{lemma}\label{2Tau}%
If $\kappa = 2\tau$, then $(\tau, \tau)$-edge is not contained in a triangle.
\end{lemma}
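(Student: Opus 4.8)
The plan is to argue by contradiction. Suppose $G$ contains a $(\tau,\tau)$-edge $uv$ lying in a triangle $uvw$, and fix a total $\kappa$-colouring $\varphi$ of $G-uv$, which exists by minimality; write $a=\varphi(uw)$ and $b=\varphi(vw)$. Since $\deg_{G}(u)=\deg_{G}(v)=\tau$, in $G-uv$ each of $u,v$ is incident with exactly $\tau-1$ coloured edges, so $|\mathcal{U}_{\varphi}(u)|=|\mathcal{U}_{\varphi}(v)|=\tau$. If $\mathcal{U}_{\varphi}(u)\cup\mathcal{U}_{\varphi}(v)\ne[\kappa]$ I can colour $uv$ immediately; hence I may assume $[\kappa]$ is the disjoint union of $\mathcal{U}_{\varphi}(u)$ and $\mathcal{U}_{\varphi}(v)$, so that $a\in\mathcal{U}_{\varphi}(u)$, $b\in\mathcal{U}_{\varphi}(v)$, $a\notin\mathcal{U}_{\varphi}(v)$ and $b\notin\mathcal{U}_{\varphi}(u)$.

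Next I would attempt two symmetric recolourings, each of which recolours only edges and is therefore governed purely by the sets $\mathcal{U}$. Recolouring $uw$ with a colour from $\mathcal{U}_{\varphi}(v)\setminus\mathcal{U}_{\varphi}(w)$ and then assigning $a$ to $uv$ produces a total $\kappa$-colouring of $G$ whenever $\mathcal{U}_{\varphi}(v)\not\subseteq\mathcal{U}_{\varphi}(w)$; symmetrically, recolouring $vw$ and assigning $b$ to $uv$ succeeds whenever $\mathcal{U}_{\varphi}(u)\not\subseteq\mathcal{U}_{\varphi}(w)$. Thus the only surviving case is $\mathcal{U}_{\varphi}(u)\cup\mathcal{U}_{\varphi}(v)=[\kappa]\subseteq\mathcal{U}_{\varphi}(w)$, i.e. $\mathcal{U}_{\varphi}(w)=[\kappa]$. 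Because $|\mathcal{U}_{\varphi}(w)|=\deg_{G}(w)+1$ and $\Delta(G)\le\kappa-1$, this forces $\deg_{G}(w)=\kappa-1$, so every colour occurs exactly once among $w$ and its incident edges. This is the same phenomenon recorded in \autoref{SumDelta3}: the apex of the triangle is driven to the largest possible degree.

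The heart of the matter is to rule out this last configuration, and this is the step I expect to be the main obstacle. Here the naive moves are blocked precisely because $w$ is saturated, so no single edge at $w$ can be recoloured in isolation. My plan is to first move $b$ onto $uv$ and erase $vw$, which is legitimate since $b\notin\mathcal{U}_{\varphi}(u)$ and which creates at $w$ a single missing colour, namely $b$. With this hole available I would run a Vizing-type fan at $w$ rooted at the uncoloured edge $vw$: rotate the colours of the edges $wz$ whose colours lie in $\mathcal{U}_{\varphi}(u)$ and finish with a Kempe $(\alpha,\beta)$-swap, so as to liberate at $w$ a colour of $\mathcal{U}_{\varphi}(u)$ and then colour $vw$ with it. The delicate point is to guarantee that the fan closes; here one must bring in the vertex colours, since the neighbours of $w$ (in particular $u$ and $v$ themselves, together with $\varphi(w)$) impose constraints that the partition $\mathcal{U}_{\varphi}(u)\sqcup\mathcal{U}_{\varphi}(v)=[\kappa]$ cannot simultaneously respect, which is what ultimately yields the contradiction.

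Finally I would dispose of the small case $\tau=2$ separately, since then $u$ and $v$ carry no edges other than $uv,uw,vw$ and the fan has no room to turn; a direct inspection of the few colourings of $G-uv$, using that the three incident vertex colours must all avoid $\varphi(w)$, shows that the disjoint partition cannot occur and $uv$ is always colourable. Everything outside the saturated-apex case is routine bookkeeping with the sets $\mathcal{U}$; the genuine work lies entirely in the fan and Kempe-chain argument for $\deg_{G}(w)=\kappa-1$.
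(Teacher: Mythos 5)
Your proposal has a genuine gap, and it sits exactly where you yourself flagged the ``heart of the matter.'' The surviving case $\mathcal{U}_{\varphi}(w)=[\kappa]$ (saturated apex, $\deg_{G}(w)=\kappa-1$) is never actually dispatched: you describe a Vizing-type fan at $w$ followed by a Kempe swap, concede that ``the delicate point is to guarantee that the fan closes,'' and then simply assert that the colour constraints ``cannot simultaneously'' be respected. That assertion \emph{is} the proof in this case, and nothing in your text substantiates it; as written it is a plan, not an argument. The paper never meets this obstacle because it sets the extension up differently: it erases the colour of the vertex $v$ before doing anything else. With $v$ uncoloured, the edge $uv$ forbids at most $\tau+(\tau-1)=\kappa-1$ colours, so $A_{\varphi}(uv)\neq\emptyset$ always, and the vertex $v$ likewise always has an available colour; the only possible failure mode is $A_{\varphi}(uv)=A_{\varphi}(v)=\{\alpha\}$, a single common colour. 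That forced case is killed by exchanging the colours of $wu$ and $wv$ --- a swap of two edge colours at the same vertex $w$, which is legal no matter how saturated $w$ is --- and then colouring $uv$ with $\alpha$ and $v$ with the old colour of $wv$. No fan, no Kempe chain, and no analysis of $\deg(w)$ is needed; the two-edge swap is precisely the device that works where your single-edge recolourings are blocked.

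There is also a smaller flaw at the very start. Because you keep $v$ coloured, your opening claim ``if $\mathcal{U}_{\varphi}(u)\cup\mathcal{U}_{\varphi}(v)\neq[\kappa]$ I can colour $uv$ immediately'' fails when $\varphi(u)=\varphi(v)$, which is possible since $u$ and $v$ are not adjacent in $G-uv$: colouring $uv$ then still leaves two adjacent vertices of $G$ with the same colour. This is repairable (recolour $v$ first, since it has at most $\kappa-1$ constraints), but it is a symptom of the same structural issue --- the vertex colour of $v$ must be treated as part of the extension problem, which is exactly what the paper's erase-$v$ step accomplishes.
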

\begin{proof}%
(See also Lemma 2~(vi) in \cite{MR2448906}) By contradiction, suppose that a $(\tau, \tau)$-edge $uv$ is contained in a triangle $uvw$. By the minimality of $G$, the graph $G - uv$ admits a total coloring with at most $\kappa$ colors. Now, we erase the color on the vertex $v$, and denote the resulting coloring by $\varphi$. Let $A_{\varphi}(uv)$ be the set of available colors for the edge $uv$, and $A_{\varphi}(v)$ the set of available colors for the vertex $v$. If there exist $\alpha_{1} \in A_{\varphi}(uv)$ and $\alpha_{2} \in A_{\varphi}(v)$ such that $\alpha_{1} \neq \alpha_{2}$, then we can extend $\varphi$ by assigning $\alpha_{1}$ to $uv$ and $\alpha_{2}$ to $v$. So we may assume that $A_{\varphi}(uv) = A_{\varphi}(v) = \{\alpha\}$. Hence, we have that $|\mathcal{U}_{\varphi}(u) \cup \mathcal{U}_{\varphi}(v)| = \kappa - 1$ and $\mathcal{U}_{\varphi}(u) \cap \mathcal{U}_{\varphi}(v) = \emptyset$. Exchanging the colors on $wu$ and $wv$, and assigning $\alpha$ to $uv$ and $\varphi(wv)$ to $v$.
\end{proof}

\begin{lemma}\label{OneSmall}%
Let $w$ be a vertex with $\{w_{1}, w_{2}, w_{3}\} \subseteq N_{G}(w)$ and $w_{1}w_{2} \in E(G)$. If $\deg(w) + \deg(w_{1}) = \kappa + 1$ and $\deg(w_{1}) \leq \left \lfloor\frac{k-1}{2} \right \rfloor$, then $\deg(w) + \deg(w_{3}) \geq \kappa + 2$. 
\end{lemma}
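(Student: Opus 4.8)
The plan is to argue by contradiction: suppose $\deg(w) + \deg(w_{3}) \leq \kappa + 1$. Since $\deg(w) + \deg(w_{1}) = \kappa + 1$, this forces $\deg(w_{3}) \leq \deg(w_{1}) \leq \lfloor \frac{\kappa-1}{2} \rfloor$, so $w_{3}$ is also a small vertex. Moreover, the edge $ww_{1}$ lies in the triangle $ww_{1}w_{2}$ and satisfies $\deg(w) + \deg(w_{1}) = \kappa + 1$, so \autoref{SumDelta3} (with $u = w$ and $v = w_{1}$) yields $\deg(w_{2}) = \kappa - 1$. Thus $w_{2}$ is a maximum-degree vertex, and in any total $\kappa$-coloring the set $\mathcal{U}(w_{2})$ is all of $[\kappa]$; consequently every edge incident with $w_{2}$ is frozen in the sense that it cannot be recoloured on its own.

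First I would delete the edge $ww_{3}$. By minimality $G - ww_{3}$ has a total coloring $\varphi$ with $\kappa$ colors, and I erase the color of $w_{3}$. Because $\deg(w_{3}) \leq \lfloor \frac{\kappa-1}{2} \rfloor$, the vertex $w_{3}$ forbids at most $2\deg(w_{3}) \leq \kappa - 1$ colors and can be recoloured at the very end, so the only genuine task is to colour the edge $ww_{3}$. The colors forbidden for $ww_{3}$ are $\mathcal{U}_{\varphi}(w)$ (of size $\deg(w)$) together with the colors $S_{3}$ on the edges at $w_{3}$ (of size $\deg(w_{3}) - 1$). If their union is not all of $[\kappa]$ we colour $ww_{3}$ and finish; hence we may assume $\mathcal{U}_{\varphi}(w) \sqcup S_{3} = [\kappa]$, which forces $\deg(w) + \deg(w_{3}) = \kappa + 1$ and in particular $\deg(w_{3}) = \deg(w_{1})$.

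The main idea is then to free a color of $\mathcal{U}_{\varphi}(w)$ for $ww_{3}$ by recolouring the triangle edge $ww_{1}$. The key counting observation is that $|\mathcal{U}_{\varphi}(w)| + |\mathcal{U}_{\varphi}(w_{1})| = \deg(w) + (\deg(w_{1}) + 1) = \kappa + 2$, so $\mathcal{U}_{\varphi}(w)$ and $\mathcal{U}_{\varphi}(w_{1})$ overlap in at least two colors, one of which is $\varphi(ww_{1})$. A short computation shows that the number of colors available for recolouring $ww_{1}$ equals $|\mathcal{U}_{\varphi}(w) \cap \mathcal{U}_{\varphi}(w_{1})| - 1$, and that any such color different from $\varphi(ww_{1})$ necessarily lies in $S_{3}$. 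Hence, when the overlap has size at least three, I can recolour $ww_{1}$ to a color $\gamma \in S_{3}$; this frees $\varphi(ww_{1}) \notin S_{3}$, which I assign to $ww_{3}$, and finally recolour $w_{3}$, producing the desired contradiction.

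The hard part is the boundary case in which the overlap is exactly two colors, for then $S_{3} \subseteq \mathcal{U}_{\varphi}(w_{1})$ and no color of $S_{3}$ is directly available at $ww_{1}$. Here one must first modify $\mathcal{U}_{\varphi}(w_{1})$ so as to release an $S_{3}$-color, and the obstacle is precisely the saturated neighbour $w_{2}$: the edges $ww_{2}$ and $w_{1}w_{2}$ are frozen, so the recolouring has to be routed through the remaining edges at $w_{1}$, or through a two-coloured Kempe chain that avoids $w_{2}$, all while respecting the vertex constraints that make total-colouring exchanges delicate. I expect this step, together with the degenerate case $\deg(w_{1}) = 2$ (where both $w$ and $w_{2}$ are maximum-degree vertices and $w_{1}$ has no further incident edge to exploit), to require the most care; I would dispatch it by a short sub-case analysis locating the frozen colors $\varphi(ww_{2})$ and $\varphi(w_{1}w_{2})$ relative to $\mathcal{U}_{\varphi}(w)$ and $S_{3}$.
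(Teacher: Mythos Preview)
Your approach has a genuine gap: you never actually resolve the ``boundary case'' (overlap exactly two), and your own analysis shows why it is awkward.  You deleted the edge $ww_{3}$, which carries no structural information, and then try to recolour $ww_{1}$ while viewing the edges $ww_{2}$ and $w_{1}w_{2}$ as \emph{frozen} because $\deg(w_{2})=\kappa-1$.  That framing is exactly backwards: the triangle $ww_{1}w_{2}$ is the only exploitable structure in the hypothesis, and swapping the colours on $ww_{2}$ and $w_{1}w_{2}$ is always legal at $w_{2}$ regardless of its degree.  By declaring those edges untouchable you have removed the one move that finishes the argument, and are left proposing Kempe chains and sub-case analyses that you do not carry out.

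The paper's proof avoids all of this by deleting the \emph{triangle} edge $ww_{1}$ rather than $ww_{3}$, and erasing the colours on both $w_{1}$ and $w_{3}$.  Then $|\mathcal{U}_{\varphi}(w)|+|\mathcal{U}_{\varphi}(w_{1})|=\deg(w)+(\deg(w_{1})-1)=\kappa$, so one gets the clean \emph{disjoint} partition $[\kappa]=\mathcal{U}_{\varphi}(w)\sqcup\mathcal{U}_{\varphi}(w_{1})$ (else colour $ww_{1}$).  Likewise $[\kappa]=\mathcal{U}_{\varphi}(w)\cup\mathcal{U}_{\varphi}(w_{3})$ (else shift $\varphi(ww_{3})$ onto $ww_{1}$ and colour $ww_{3}$), which forces $\mathcal{U}_{\varphi}(w_{3})=\mathcal{U}_{\varphi}(w_{1})\cup\{\varphi(ww_{3})\}$ and hence $\varphi(ww_{2})\notin\mathcal{U}_{\varphi}(w_{3})$.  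Now the single swap of $ww_{2}$ with $w_{1}w_{2}$---legal precisely because $\mathcal{U}_{\varphi}(w)$ and $\mathcal{U}_{\varphi}(w_{1})$ are disjoint---frees $\varphi(ww_{2})$ at $w$, after which one assigns $\varphi(ww_{2})$ to $ww_{3}$ and $\varphi(ww_{3})$ to $ww_{1}$.  No appeal to \autoref{SumDelta3}, no boundary case, no Kempe chains.
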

\begin{proof}%
Suppose that $\deg(w) + \deg(w_{3}) \leq \kappa + 1$, which implies that $\deg(w_{3}) \leq \left \lfloor\frac{k-1}{2} \right \rfloor$. The graph $G - ww_{1}$ admits a total coloring with at most $\kappa$ colors. Now, we erase the colors on the vertices $w_{1}$ and $w_{3}$, and denote the resulting coloring by $\varphi$. Notice that $\{1, 2, \dots, \kappa\}$ is the disjoint union of $\mathcal{U}_{\varphi}(w)$ and $\mathcal{U}_{\varphi}(w_{1})$. Notice also that $\mathcal{U}_{\varphi}(w) \cup \mathcal{U}_{\varphi}(w_{3}) = \{1, 2, \dots, \kappa\}$; otherwise, reassigning $\varphi(ww_{3})$ to $ww_{1}$ and assigning a color in $[\kappa] \setminus (\mathcal{U}_{\varphi}(w) \cup \mathcal{U}_{\varphi}(w_{3}))$ to $ww_{3}$. Note that $\varphi(ww_{2}) \notin \mathcal{U}_{\varphi}(w_{3})$. Now, exchanging the colors on $ww_{2}$ and $w_{1}w_{2}$, and reassigning $\varphi(ww_{2})$ to $ww_{3}$ and $\varphi(ww_{3})$ to $ww_{1}$.   
\end{proof}

\begin{lemma}\label{TwoTriangles}%
Let $ww_{2}$ be contained in two triangles $ww_{1}w_{2}$ and $ww_{2}w_{3}$. If $\deg(w) + \deg(w_{2}) = \kappa + 1$ and $\deg(w_{2}) \leq \left \lfloor\frac{k-1}{2} \right \rfloor$, then all the other neighbors of $w$ have degree at least $\deg(w_{2}) + 2$ or $\left \lfloor\frac{k+1}{2} \right \rfloor$. 
\end{lemma}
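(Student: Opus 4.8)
The plan is to argue by contradiction. Suppose some neighbor $u$ of $w$ with $u \notin \{w_{1}, w_{2}, w_{3}\}$ has $\deg(u) \le \deg(w_{2}) + 1$ and $\deg(u) \le \left\lfloor\frac{\kappa-1}{2}\right\rfloor$; this is the negation of the conclusion, since $\left\lfloor\frac{\kappa+1}{2}\right\rfloor - 1 = \left\lfloor\frac{\kappa-1}{2}\right\rfloor$. Write $d = \deg(w)$ and $d_{2} = \deg(w_{2})$, so $d = \kappa + 1 - d_{2}$. First I would apply \autoref{DEDGE} to the edge $wu$: since $\deg(u) \le \left\lfloor\frac{\kappa-1}{2}\right\rfloor$ we get $\deg(u) + d \ge \kappa + 1$, hence $\deg(u) \ge d_{2}$; combined with the assumption this pins down $\deg(u) \in \{d_{2}, d_{2}+1\}$. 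This narrowing is exactly what makes the second triangle barely enough.

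By minimality, $G - ww_{2}$ has a total $\kappa$-coloring; erase the colors of the low-degree vertices $w_{2}$ and $u$ and call the result $\varphi$. Since $w_{2}$ and $u$ can be recolored at the very end, it suffices to color the edge $ww_{2}$. If $ww_{2}$ could be colored directly we are done, so $\mathcal{U}_{\varphi}(w) \cup \mathcal{U}_{\varphi}(w_{2}) = [\kappa]$; as $|\mathcal{U}_{\varphi}(w)| + |\mathcal{U}_{\varphi}(w_{2})| = d + (d_{2}-1) = \kappa$, this is a disjoint union. Put $\overline{W} = [\kappa]\setminus\mathcal{U}_{\varphi}(w) = \mathcal{U}_{\varphi}(w_{2})$. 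Next I would show $\mathcal{U}_{\varphi}(w)\cup\mathcal{U}_{\varphi}(u) = [\kappa]$: otherwise a color $\beta$ missing from both lets us recolor $wu$ by $\beta$, recolor $ww_{2}$ by the freed color $\varphi(wu)$ (which lies in $\mathcal{U}_{\varphi}(w)$, hence off $w_{2}$), and finish. Consequently $\overline{W}\subseteq\mathcal{U}_{\varphi}(u)$, and writing $X = \mathcal{U}_{\varphi}(u)\cap\mathcal{U}_{\varphi}(w)$ we obtain $\mathcal{U}_{\varphi}(u) = \overline{W}\sqcup X$ with $|X| = \deg(u) - (d_{2}-1) \in \{1,2\}$ and $\varphi(wu)\in X$.

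The heart of the argument is a recoloring maneuver $M_{i}$ attached to each triangle $ww_{i}w_{2}$ for $i \in \{1,3\}$. Writing $a_{i} = \varphi(ww_{i})\in\mathcal{U}_{\varphi}(w)$ and $b_{i} = \varphi(w_{i}w_{2})\in\overline{W}$, the maneuver is: swap the colors on $ww_{i}$ and $w_{i}w_{2}$ (a legal triangle swap, as $b_{i}$ is off $w$ and $a_{i}$ is off $w_{2}$), which frees $a_{i}$ at $w$; then recolor $wu$ by $a_{i}$; then recolor $ww_{2}$ by $\varphi(wu)$; finally recolor $w_{2}$ and $u$. I would check that $M_{i}$ is proper exactly when $a_{i}\notin\mathcal{U}_{\varphi}(u)$, i.e.\ when $a_{i}\notin X$. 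The punchline is then immediate: $a_{1}$ and $a_{3}$ are colors of the distinct edges $ww_{1}, ww_{3}$, so $a_{1}\neq a_{3}$, while $X\setminus\{\varphi(wu)\}$ has at most one element; since $a_{i}\neq\varphi(wu)$ always, at least one of $a_{1}, a_{3}$ avoids $X$, so at least one maneuver succeeds and yields a total $\kappa$-coloring of $G$, the desired contradiction.

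The main obstacle is recognizing that a single triangle does not suffice: any one triangle swap merely exchanges one color between $\mathcal{U}_{\varphi}(w)$ and $\overline{W}$, and so preserves the partition $\mathcal{U}_{\varphi}(w)\sqcup\overline{W} = [\kappa]$ that blocks $ww_{2}$. The swap becomes useful only once it is routed through the third vertex $u$, and that routing can be obstructed by the (at most one) extra common color of $u$ and $w$ in the case $\deg(u) = d_{2}+1$. The entire role of the two triangles is to supply two distinct routing colors $a_{1}\neq a_{3}$, at most one of which can be bad; verifying the propriety of $M_{i}$ step by step, especially that $a_{i}$ is genuinely free at both ends of $wu$ after the swap, is the only place needing care, and it is routine.
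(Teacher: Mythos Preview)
Your argument is correct and follows essentially the same route as the paper: delete $ww_{2}$, erase the colors on $w_{2}$ and the small neighbor, establish the disjoint partition $\mathcal{U}_{\varphi}(w)\sqcup\mathcal{U}_{\varphi}(w_{2})=[\kappa]$ and the inclusion $\mathcal{U}_{\varphi}(w_{2})\subseteq\mathcal{U}_{\varphi}(u)$, and then use the two triangle swaps to force $\varphi(ww_{1}),\varphi(ww_{3})\in\mathcal{U}_{\varphi}(u)$ as well. The only cosmetic difference is that the paper performs both swaps simultaneously and reapplies the covering argument once to the new coloring $\sigma$, whereas you route each swap separately through $wu$ and count via $|X|\le 2$; the content is identical.
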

\begin{proof}%
Suppose that $w_{4}$ is a vertex with degree at most $\left \lfloor\frac{k-1}{2} \right \rfloor$. The graph $G - ww_{2}$ admits a total coloring with at most $\kappa$ colors. Now, we erase the colors on the vertices $w_{2}$ and $w_{4}$, and denote the resulting coloring by $\varphi$. Notice that $\{1, 2, \dots, \kappa\}$ is the disjoint union of $\mathcal{U}_{\varphi}(w)$ and $\mathcal{U}_{\varphi}(w_{2})$. Notice also that $\mathcal{U}_{\varphi}(w) \cup \mathcal{U}_{\varphi}(w_{4}) = \{1, 2, \dots, \kappa\}$; otherwise,  reassigning $\varphi(ww_{4})$ to $ww_{2}$ and assigning a color in $[\kappa] \setminus (\mathcal{U}_{\varphi}(w) \cup \mathcal{U}_{\varphi}(w_{4}))$ to $ww_{4}$. This implies that $\mathcal{U}_{\varphi}(w_{2}) \subset \mathcal{U}_{\varphi}(w_{4})$. Now, exchanging the colors on $ww_{1}$ and $w_{1}w_{2}$, and additionally exchanging the colors on $ww_{3}$ and $w_{3}w_{2}$, we obtain another partial total coloring $\sigma$. Similarly, we have that $\mathcal{U}_{\sigma}(w_{2}) \subset \mathcal{U}_{\sigma}(w_{4}) = \mathcal{U}_{\varphi}(w_{4})$, which implies that $\varphi(ww_{1}), \varphi(ww_{3}) \subseteq \mathcal{U}_{\varphi}(w_{4})$. Hence, we have that $\deg(w_{4}) \geq |\mathcal{U}_{\varphi}(w_{2})| + |\{\varphi(ww_{1}), \varphi(ww_{3}), \varphi(ww_{4})\}| = \deg(w_{2}) + 2$. 
\end{proof}

\begin{figure}%
\centering
\subfloat[]{\label{RC1}\includegraphics{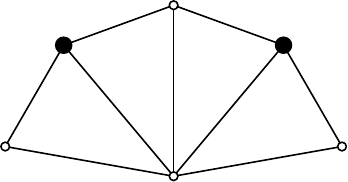}}\hfill~
\subfloat[]{\label{RC2}\includegraphics{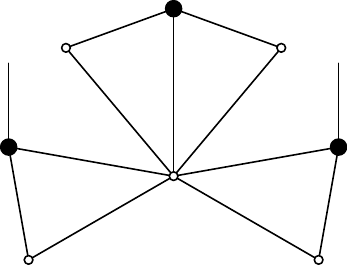}}\hfill~
\subfloat[]{\label{RC3}\includegraphics{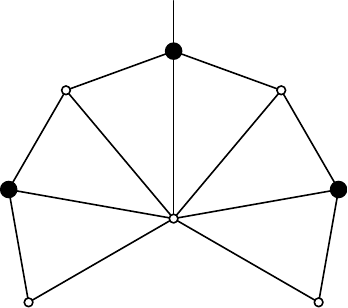}}\hfill~
\subfloat[]{\label{RC4}\includegraphics{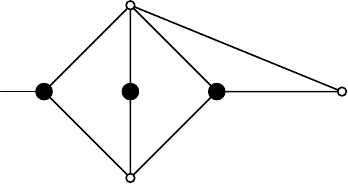}}\hfill~
\subfloat[]{\label{RC5}\includegraphics{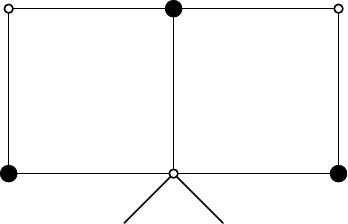}}\hfill~
\subfloat[]{\label{RC6}\includegraphics{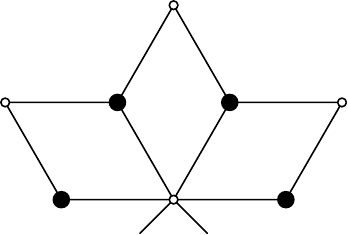}}\hfill~
\subfloat[]{\label{RC7}\includegraphics{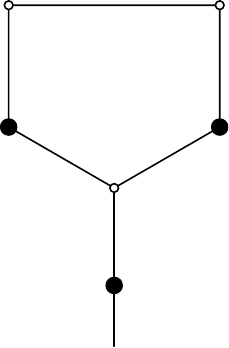}}\hfill~
\subfloat[]{\label{RC8}\includegraphics{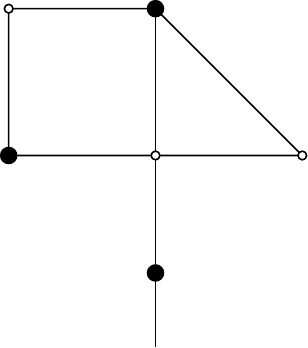}}
\caption{Reducible configurations}
\label{}
\end{figure}

\begin{figure}%
\centering
\subfloat[]{\label{RC9}\includegraphics{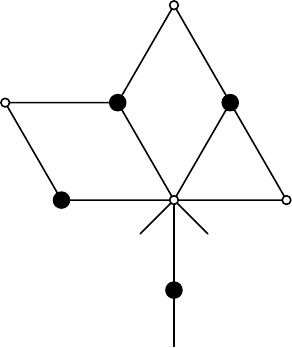}}\hfill~
\subfloat[]{\label{RC10}\includegraphics{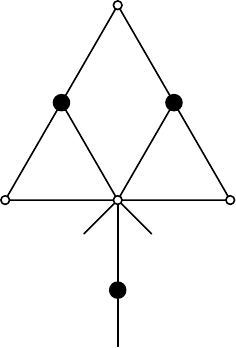}}\hfill~
\subfloat[]{\label{Net}\includegraphics{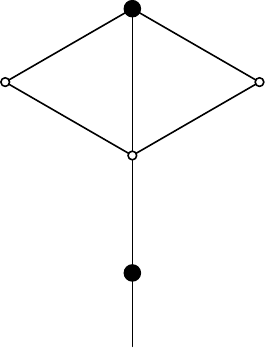}}
\caption{Reducible configurations}
\label{}
\end{figure}

\begin{lemma}%
Let $ww_{2}$ be contained in two triangles $ww_{2}w_{1}$ and $ww_{2}w_{3}$. If $\kappa \geq 7$ and $w_{1}$ is a $2$-vertex, then $\deg(w_{3}) \geq 4$.
\end{lemma}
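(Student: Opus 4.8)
The plan is to show that assuming $\deg(w_3)\le 3$ produces a total $\kappa$-colouring of $G$, contradicting $\kappa$-deletion-minimality. First I would record the local structure. Since $w_1$ is a $2$-vertex adjacent to both $w$ and $w_2$ in the triangle $ww_1w_2$, \autoref{DEDGE} forces $\deg(w)=\deg(w_2)=\kappa-1$. Because $\kappa\ge 7$, a vertex of degree at most $3$ forbids at most $6\le\kappa-1$ colours, so both $w_1$ (degree $2$) and $w_3$ (degree at most $3$) can always be coloured last once everything incident to them is fixed; this is essentially the only place the hypothesis $\kappa\ge 7$ enters, which is why the bound is stated for $\kappa\ge 7$.

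Next I would delete the whole $2$-vertex: by minimality $G-w_1$ has a total $\kappa$-colouring $\varphi$, whose colour on $w_3$ I erase. In $G-w_1$ both $w$ and $w_2$ have degree $\kappa-2$, so each of $\mathcal{U}_\varphi(w)$ and $\mathcal{U}_\varphi(w_2)$ has size $\kappa-1$ and misses exactly one colour, say $f_w$ and $f_{w_2}$. If $f_w\ne f_{w_2}$, I simply put $ww_1=f_w$ and $w_1w_2=f_{w_2}$ (legal, since these are the unique missing colours at $w$ and $w_2$, and they differ at $w_1$), then colour $w_1$ and $w_3$ and reach the contradiction.

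The substantive case is $f_w=f_{w_2}=:f$, where $ww_1$ and $w_1w_2$ compete for the same colour. Writing $e=\varphi(ww_2)$, $c=\varphi(ww_3)$, $b=\varphi(w_2w_3)$, and $d=\varphi(w_3x)$ for the third edge $w_3x$ at $w_3$ when $\deg(w_3)=3$, the idea is to free a \emph{second} missing colour at $w$ by recolouring one edge of the triangle $ww_2w_3$ to $f$. Recolouring $ww_3$ from $c$ to $f$ is legal unless $d=f$, and it makes $w$ miss $c$; I then set $ww_1=c$ and $w_1w_2=f$. I expect the only delicate point to be the sub-case $d=f$, where this direct move is blocked. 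It is resolved by first recolouring the common edge $ww_2$ from $e$ to $f$ — legal precisely because \emph{both} $w$ and $w_2$ miss $f$ — which makes both of them miss $e$ instead; since $e\ne f$ (as $e\in\mathcal{U}_\varphi(w)$), we now have $d=f\ne e$, so I can legally recolour $ww_3$ from $c$ to $e$, making $w$ miss $c$ while $w_2$ still misses $e$, and finish with $ww_1=c$, $w_1w_2=e$.

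In every branch the recolourings touch only edges at $w$, $w_2$, and $w_3$, leaving the uncoloured vertices $w_1$ and $w_3$ with at most $4$ and at most $6$ forbidden colours respectively, so they can be coloured last, yielding a total $\kappa$-colouring of $G$ and the desired contradiction. The crux, and the step I would be most careful about, is the bookkeeping in the case $f_w=f_{w_2}$: verifying that the two potential obstructions — $d=f$ for the direct recolouring, and $d=e$ after the $ww_2$-recolouring — cannot hold simultaneously, which is exactly the inequality $e\ne f$. Note that this route avoids Kempe-chain arguments entirely, since deleting the whole vertex $w_1$ (rather than a single edge $ww_1$) keeps both $w$ and $w_2$ below maximum degree and leaves each of them with a free colour to manipulate.
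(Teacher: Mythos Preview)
Your argument is correct. The paper, however, takes a slightly different route: it deletes only the single edge $ww_{1}$ (not the whole vertex $w_{1}$), erases the colours on $w_{1}$ and $w_{3}$, and observes that $[\kappa]$ must be the disjoint union of $\mathcal{U}_{\varphi}(w)$ and $\{\varphi(w_{1}w_{2})\}$. Then either $\varphi(w_{1}w_{2})\notin\mathcal{U}_{\varphi}(w_{3})$, in which case one shifts $\varphi(ww_{3})$ onto $ww_{1}$ and puts $\varphi(w_{1}w_{2})$ on $ww_{3}$; or $\varphi(w_{1}w_{2})$ is the colour of the third edge at $w_{3}$, in which case a single simultaneous recolouring of the four edges $ww_{1},w_{1}w_{2},ww_{2},ww_{3}$ finishes.

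Your vertex-deletion approach buys symmetry between $w$ and $w_{2}$ (each missing exactly one colour) at the cost of an extra preliminary case split on whether $f_{w}=f_{w_{2}}$, and then a two-step recolouring in the hard sub-case $d=f$. The paper's edge-deletion approach keeps $w_{1}w_{2}$ coloured as a pivot, so the ``missing colour at $w$'' is pinned down immediately as $\varphi(w_{1}w_{2})$ and the whole argument collapses to two short branches. Both are valid and of comparable length; the paper's version is marginally slicker because it never needs to manufacture a second free colour.
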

\begin{proof}%
(See also Lemma 3~(iv) in \cite{MR2448906}). By contradiction, suppose that $w_{3}$ is a $3^{-}$-vertex. By the minimality of $G$, the graph $G - ww_{1}$ has a total coloring with at most $\kappa$ colors. Now, we erase the colors on the vertices $w_{1}$ and $w_{3}$, and denote the resulting coloring by $\varphi$. Thus, $[\kappa]$ is the disjoint union of $\varphi(w_{1}w_{2})$ and $\mathcal{U}_{\varphi}(w)$; otherwise, we can assign an available color to $ww_{1}$. If $\varphi(w_{1}w_{2}) \notin \mathcal{U}_{\varphi}(w_{3})$, then we recolor $ww_{1}$ and $ww_{3}$ with $\varphi(ww_{3})$ and $\varphi(w_{1}w_{2})$, respectively. Thus, we have $\varphi(w_{1}w_{2}) \in \mathcal{U}_{\varphi}(w_{3})$, but $\varphi(w_{1}w_{2}) \notin \{\varphi(w_{2}w_{3}), \varphi(ww_{3})\}$. Now, we recolor $ww_{1}, w_{1}w_{2}, ww_{2}$ and $ww_{3}$ with $\varphi(ww_{3}), \varphi(ww_{2}), \varphi(w_{1}w_{2})$ and $\varphi(ww_{2})$, respectively. 
\end{proof}
\begin{lemma}\label{No3*}%
If $\kappa \geq 7$ and $uvw$ is a $(2, \Delta, \Delta)$-triangle with $\deg(u) = 2$, then $v$ is not contained in another $(\Delta, 3, *)$-triangle. 
\end{lemma}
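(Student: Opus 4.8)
The plan is to argue by minimality in the standard way. Let $w$ be the third vertex of the triangle $uvw$ and $z$ the neighbour of $x$ outside the triangle $vxy$, so that $\deg(u)=2$, $\deg(v)=\deg(w)=\kappa-1$, $\deg(x)=3$, and $N(x)=\{v,y,z\}$. Since $\kappa\ge 7$ we have $3,2\le\lfloor(\kappa-1)/2\rfloor$, so $u$ and $x$ are ``free'' vertices that can be recoloured at the very end (each forbids at most $2\deg\le\kappa-1$ colours), and \autoref{DEDGE} applied to the edges $xy$ and $xz$ forces $\deg(y),\deg(z)\ge\kappa-2$. I would delete the edge $uv$, take a total $\kappa$-colouring $\varphi$ of $G-uv$ guaranteed by $\kappa$-deletion-minimality, and erase the colours on $u$ and $x$. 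Now $v$ is incident with $\kappa-2$ coloured edges, so exactly one colour $\beta$ is missing at $v$, while $u$ retains only the edge $uw$. Thus $uv$ can be coloured (and $u$ afterwards recoloured) unless $\beta=\varphi(uw)$, and the whole problem is the case $\beta=\varphi(uw)$.

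The engine of the argument is a one-edge \emph{shift move}: if the missing colour $\beta$ is free at some neighbour $t\neq u$ of $v$ (i.e.\ $\beta$ lies neither on $t$ nor on any edge at $t$ other than $vt$), then recolouring $vt$ with $\beta$ and assigning $\varphi(vt)$ to $uv$ extends the colouring, since $\varphi(vt)\in\mathcal{U}_{\varphi}(v)$ gives $\varphi(vt)\neq\beta=\varphi(uw)$. Applying this with $t=x$ (whose vertex colour was erased) shows that the bad case can persist only if $\beta\in\{\varphi(xy),\varphi(xz)\}$. Next I would exchange the colours of $uw$ and $vw$; this is legal precisely because $\beta$ was missing at $v$, and it yields a new colouring in which $v$ misses $\varphi(vw)$ while $\varphi(vw)$ now plays the role of the colour on $uw$. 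Running the same shift move at $t=x$ (the colours at $x$ are untouched by the swap) forces $\varphi(vw)\in\{\varphi(xy),\varphi(xz)\}$ as well. As $\varphi(uw)\neq\varphi(vw)$ and $\varphi(xy)\neq\varphi(xz)$, this pins me down to the rigid configuration $\{\varphi(uw),\varphi(vw)\}=\{\varphi(xy),\varphi(xz)\}$.

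To finish the rigid case, let $r=\varphi(xz)$ be the colour on the edge from $x$ to the outside vertex $z$; since $r\in\{\varphi(uw),\varphi(vw)\}$ I may, after possibly performing the swap above, assume $v$ misses exactly $r$. I then have two independent ways to win. First, the shift move at $t=y$ succeeds whenever $r$ is free at $y$. Second, I can instead try to liberate $r$ at $x$ by recolouring the edge $xz$ (its only edge carrying $r$) and then apply the shift move at $t=x$ with $vx\mapsto r$; recolouring $xz$ requires only a colour avoiding $\varphi(vx),\varphi(xy)$ and the palette of $z$, which is available unless $z$ is essentially saturated. Thus the only surviving situation is the one in which $r$ already occurs at $y$ \emph{and} $z$ is a $\Delta$-vertex, together with the permanently saturated $w$ (deleting $uv$ never touched it).

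The main obstacle, as this reduction makes clear, is exactly these saturated neighbours: when $w$, $y$ and $z$ all have full palettes, no single recolouring can free a colour, so the finishing step must be a two-colour Kempe (alternating-path) swap in the total colouring. I expect the crux of the work to be choosing the right pair of colours — for instance $r$ together with $\varphi(vx)$ (or with $\beta$) — and swapping along the chain running through $x$, $y$, $z$ and $w$ so as to manufacture a \emph{second} missing colour at $v$ and collapse back to one of the cases already handled; verifying that such a chain cannot close up badly is where the delicacy lies. Finally, I would dispatch the degenerate coincidences $y=w$ and $z=w$ separately, but these merely force extra edges at the already-saturated $w$ and should contradict the rigid colour equalities almost immediately.
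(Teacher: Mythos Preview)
The paper does not actually prove this lemma: it simply cites Lemma~3(v) of Kowalik, Sereni and \v{S}krekovski, so there is no detailed argument in the paper to compare against beyond noting that the cited proof is a short local recolouring.

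Your reductions are sound: the shift move at $x$, the $uw\leftrightarrow vw$ swap, and the conclusion $\{\varphi(uw),\varphi(vw)\}=\{\varphi(xy),\varphi(xz)\}$ are all correct. The genuine gap is the final step. You stop at the rigid configuration and say you ``expect'' a two-colour Kempe swap to finish it, but you neither choose the colours nor verify that the alternating path does not return to $v$ or $w$ and recreate a conflict. In total colourings such chains are delicate, and an unexecuted Kempe argument is not a proof.

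In fact no Kempe chain is needed; you are overlooking the second triangle as a source of another local swap. After possibly performing your $uw\leftrightarrow vw$ swap, you may assume the colour missing at $v$ equals $\varphi(xy)$. Now exchange the colours on $vy$ and $xy$ (legal at $y$ trivially; legal at $v$ since $\varphi(xy)$ is the missing colour there; legal at $x$ since the old $\varphi(vy)$ differs from $\varphi(vx)$ and from $\varphi(xz)=\varphi(vw)$, both being distinct edge-colours at $v$). After this exchange $v$ misses the old $\varphi(vy)$, which is not $\varphi(uw)$, so $uv$ can be coloured directly. This is exactly the mirror of your $uw\leftrightarrow vw$ trick, applied to the triangle $vxy$ instead of $uvw$. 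Your degenerate cases $y=w$ or $z=w$ never reach the rigid configuration at all: if $y=w$ (resp.\ $z=w$) then $\varphi(xw)$ is distinct from both $\varphi(uw)$ and $\varphi(vw)$ at $w$, contradicting $\varphi(xw)\in\{\varphi(uw),\varphi(vw)\}$.
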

\begin{proof}%
See \cite[Lemma 3~(v)]{MR2448906}.
\end{proof}

\begin{lemma}\label{4Fan}%
If $\kappa \geq 7$, then the graph $G$ contains no configuration in \autoref{RC1}.
\end{lemma}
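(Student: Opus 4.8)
The plan is to argue by contradiction in the usual way for these reducibility lemmas: suppose the $\kappa$-deletion-minimal graph $G$ contains the $4$-fan of \autoref{RC1}, consisting of the central vertex $w$ together with the fan of triangles shown and its degree-two vertices. Since each degree-two vertex has degree at most $\left\lfloor\frac{\kappa-1}{2}\right\rfloor$, by the remark preceding \autoref{2-connected} such vertices can always be colored last, so they never constitute a real obstruction. The strategy, exactly as in \autoref{SumDelta3}, \autoref{OneSmall}, and \autoref{TwoTriangles}, is therefore to delete one carefully chosen spoke edge at the end of the fan, invoke the minimality of $G$ to recolor everything else, erase the colors on the low-degree fan vertices, and then recolor the deleted edge after a controlled sequence of color swaps propagating along the fan.

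Concretely, I would first delete the spoke edge joining $w$ to the degree-two vertex at one end of the fan; by minimality the resulting proper subgraph admits a total coloring $\varphi$ with at most $\kappa$ colors. After erasing the colors on all the degree-two fan vertices, the only genuine task is to color the deleted edge while leaving room to recolor the incident triangle edges. If $[\kappa] \neq \mathcal{U}_{\varphi}(w) \cup \mathcal{U}_{\varphi}(u)$ for the relevant endpoint $u$, a free color can be assigned to the deleted edge immediately; so I may assume $[\kappa] = \mathcal{U}_{\varphi}(w) \cup \mathcal{U}_{\varphi}(u)$, and a degree count forces this union to be \emph{disjoint}, just as in the proofs above. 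This disjointness pins down precisely which colors occupy the spokes and triangle edges of the fan.

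With that structure in hand, I would perform the characteristic fan rotation: on each triangle $wv_iv_{i+1}$ in turn, exchange the colors $\varphi(wv_{i+1})$ and $\varphi(v_iv_{i+1})$, thereby shifting an available color one triangle closer to the deleted edge. Each exchange is legal only when the incoming color is absent from the relevant vertex, and after at most four such swaps I expect a color to become available at the deleted edge, contradicting that $G$ is a counterexample. The main obstacle, and the place where the hypothesis $\kappa \geq 7$ is essential, is controlling the interaction of the swapped colors: a naive rotation can stall whenever two consecutive fan triangles demand the same color. The heart of the argument is to show that such a clash cannot persist all the way through a fan of length four, because the disjointness of $\mathcal{U}_{\varphi}(w)$ and $\mathcal{U}_{\varphi}(u)$ together with the availability of at least seven colors always leaves a free color at some triangle, breaking the chain. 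Verifying that every stalling configuration yields such a free color, rather than a genuine obstruction, is the delicate case analysis I expect to dominate the proof.
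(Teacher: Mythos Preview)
The paper does not actually prove this lemma: its entire proof is the citation ``See \cite[Lemma~6]{MR2448906}.'' So there is no in-paper argument to compare your sketch against; the content lives in Kowalik--Sereni--\v{S}krekovski.

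That said, your proposal appears to rest on a misreading of the configuration. You repeatedly describe the low-degree fan vertices as $2$-vertices and plan to delete ``the spoke edge joining $w$ to the degree-two vertex at one end of the fan.'' But if you look at how \autoref{4Fan} is \emph{used} later in the paper, it is invoked (together with \autoref{FlightFan}) to conclude that a $(\kappa-1)$-vertex surrounded only by triangles is adjacent to at most two $3$-vertices, and that every fan at such a vertex contains at most three $3$-vertices. So the solid dots in \autoref{RC1} are $3$-vertices, not $2$-vertices. This matters: with $3$-vertices each low-degree fan vertex has an outside neighbour whose edge-colour you do not control, so the ``disjoint union'' counting you borrow from \autoref{SumDelta3} and \autoref{OneSmall} no longer goes through as stated, and your rotation step must keep track of those extra colours at every swap. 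The broad outline---delete a spoke, colour by minimality, erase the small vertices, then push a free colour along the fan by successive exchanges---is indeed the standard mechanism and is what the cited lemma does, but the case analysis you anticipate is governed by the interaction of the outside edges at the $3$-vertices, not by the simpler $2$-vertex bookkeeping you describe. Until you redo the availability counts with $3$-vertices, the sketch does not yet establish the lemma.
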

\begin{proof}%
See \cite[Lemma~6]{MR2448906}. 
\end{proof}

\begin{lemma}\label{FlightFan}%
If $\kappa \geq 7$, then the graph $G$ contains no configuration in \autoref{RC2}.
\end{lemma}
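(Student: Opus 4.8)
The plan is to argue by contradiction, following exactly the template established in \autoref{2Tau}, \autoref{OneSmall}, \autoref{TwoTriangles} and \autoref{4Fan}. Suppose $G$ contains the configuration of \autoref{RC2}; let $w$ be its central high-degree vertex and let $w_{1}, w_{2}, \dots$ be the peripheral vertices of the fan, among which the critical ones are $2$-vertices sitting in the successive triangles of the ``flight''. Because these peripheral vertices have degree at most $\lfloor (\kappa-1)/2 \rfloor$, by the remark preceding \autoref{2-connected} they forbid at most $\kappa-1$ colors each, so they are precisely the vertices whose colors I will erase at the start and recover greedily at the end; the whole difficulty is concentrated in the center $w$ and its fan-neighbours.

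First I would delete a carefully chosen edge $e$ of the fan, namely an edge $ww_{1}$ joining the center to one of the $2$-vertices lying in the spine of the configuration, chosen so that the deletion leaves the rest of the structure intact. By the minimality of $G$, the graph $G - e$ admits a total coloring with at most $\kappa$ colors; erasing the colors on all peripheral $2$-vertices produces a partial coloring $\varphi$ of $G$. As in \autoref{OneSmall} and \autoref{TwoTriangles}, the inextensibility of $\varphi$ to $e$ forces $[\kappa]$ to be the \emph{disjoint} union of $\mathcal{U}_{\varphi}(w)$ and $\mathcal{U}_{\varphi}$ of the relevant endpoint, and this in turn yields tight set-containments among the color sets $\mathcal{U}_{\varphi}$ of $w$ and its fan-neighbours. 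Next I would propagate color exchanges across the triangles of the flight, swapping $\varphi$ on $ww_{i}$ with $\varphi$ on $w_{i}w_{i+1}$ (the move used repeatedly above), each swap being legitimate because the exchanged colors avoid the small color sets of the erased $2$-vertices. After enough such swaps a color is liberated that can be assigned to $e$, whereupon every erased $2$-vertex is recolored, contradicting the choice of $G$.

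The hard part will be the bookkeeping of the simultaneous constraints along the whole fan: unlike the single- and double-triangle lemmas, the flight fan chains several triangles, so I must verify that each exchange preserves all previously established containments and that the chain terminates by producing a genuinely free color rather than merely shifting the same color deficiency from one triangle to the next. The hypothesis $\kappa \geq 7$ should be exactly what guarantees that the union of the small neighbour color sets never exhausts $[\kappa]$, leaving the slack needed for the final recoloring; locating precisely where that inequality is consumed, most plausibly in bounding the sum of the peripheral degrees against $\kappa$, is the real crux. As with \autoref{4Fan}, I expect the statement can ultimately be cited to \cite{MR2448906}, but the self-contained argument proceeds along the lines sketched here.
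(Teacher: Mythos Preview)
The paper's own proof of this lemma is nothing more than the citation ``See \cite[Lemma~4]{MR2448906}'', which you correctly anticipate in your closing sentence; in that narrow sense your proposal agrees with the paper.

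Your self-contained outline, however, rests on a concrete misidentification of the configuration. You describe the peripheral low-degree vertices of \autoref{RC2} as $2$-vertices, but this is inconsistent with how \autoref{FlightFan} is actually invoked later in the proof of the main theorem (case~(b)): there, together with \autoref{4Fan}, it is used to bound the number of \emph{$3$-vertices} lying on fans at a $(\kappa-1)$-vertex that is explicitly assumed \emph{not} adjacent to any $2$-vertex. The flight-fan configuration therefore carries $3$-vertices along the fan, not $2$-vertices. This changes the arithmetic you rely on: each erased peripheral vertex now has two outside incidences rather than one, so the containments $\mathcal{U}_{\varphi}(w_{i})\subset\mathcal{U}_{\varphi}(w_{j})$ you hope to force do not drop out of a simple disjoint-union count, and the point at which the hypothesis $\kappa\geq 7$ is consumed must be relocated. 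The overall strategy you describe---delete a spoke $ww_{1}$, take a total $\kappa$-coloring of $G-ww_{1}$ by minimality, erase the small vertices, then rotate colors along the triangles of the fan until a color is freed for $ww_{1}$---is indeed the method of Kowalik, Sereni and {\v{S}}krekovski, but the detailed exchange sequence and the termination argument depend on having the correct degree data for the peripheral vertices, so your sketch as written would not go through.
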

\begin{proof}
See \cite[Lemma~4]{MR2448906}. 
\end{proof}

\begin{lemma}%
If $\kappa \geq 9$, then the graph $G$ contains no configuration in \autoref{RC3}. 
\end{lemma}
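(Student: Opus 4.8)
The plan is to argue by contradiction along exactly the lines of the preceding reducibility lemmas. Suppose $G$ contains the configuration of \autoref{RC3}, and let $w$ denote its central high-degree vertex with the relevant low-degree neighbours $w_{1}, w_{2}, \dots$ arranged in the depicted triangles. I would first single out one edge $e$ incident with a low-degree vertex of the configuration --- the edge whose removal is least constrained --- and invoke $\kappa$-deletion-minimality to obtain a total coloring of $G - e$ with at most $\kappa$ colors. As in \autoref{TwoTriangles} and \autoref{OneSmall}, I would then erase the colors on every vertex of the configuration whose degree is at most $\left\lfloor \frac{\kappa-1}{2} \right\rfloor$; by the remark preceding \autoref{2-connected} each such vertex forbids at most $\kappa - 1$ colors, so it can safely be recolored at the very end and need not be tracked during the argument. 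Denote the resulting partial total coloring by $\varphi$.

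The second step is the saturation analysis. If $e$ admitted an available color we would already be done, so $[\kappa]$ must be the disjoint union of $\mathcal{U}_{\varphi}$ of the two endpoints of $e$; combined with the degree constraints encoded in the figure (degree sums equal to $\kappa + 1$, in the spirit of \autoref{DEDGE} and \autoref{SumDelta3}) this pins down the cardinalities of all the relevant color sets. I would then extract the membership relations forced among the colors on the triangle edges --- statements of the form $\varphi(xy) \in \mathcal{U}_{\varphi}(z)$ together with exclusions $\varphi(xy) \notin \{\varphi(\,\cdot\,)\}$ --- exactly as in the proofs of \autoref{TwoTriangles} and the $3^{-}$-vertex lemma, using that each triangle shares two of its edges with $w$.

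The third step is to perform Kempe-type exchanges on pairs of edges meeting at $w$ (and at the other apex vertices of the triangles), of the form ``exchange the colors on $ww_{i}$ and $w_{i}w_{j}$''. Each such exchange either frees a color for $e$ directly or transports the unique conflicting color onto a neighbour of smaller degree, where the degree slack guarantees a spare color. Iterating the saturation argument across the two or three interacting triangles should then force one of the low-degree color sets to exceed its cardinality bound, which is the desired contradiction; the conclusion $\deg(\,\cdot\,) \geq |\mathcal{U}_{\varphi}(\,\cdot\,)| + \text{(several distinct edge colors)}$ from \autoref{TwoTriangles} is the model for this final count.

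The main obstacle, and presumably the reason the threshold rises to $\kappa \geq 9$ here rather than the $\kappa \geq 7$ of \autoref{4Fan} and \autoref{FlightFan}, is the simultaneous bookkeeping: because the configuration in \autoref{RC3} couples several triangles through the common vertex $w$, an exchange that repairs one edge can recreate a conflict at another. I expect the crux to be choosing an exchange edge whose color lies outside the union of the forbidden sets at \emph{all} the remaining uncolored elements at once. This is possible precisely because $\kappa \geq 9$ leaves the palette strictly larger than twice $\left\lfloor \frac{\kappa-1}{2} \right\rfloor$ together with the handful of colors blocked by the triangle edges, a margin that the weaker bound $\kappa \geq 7$ would not supply. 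Verifying that this margin survives every exchange in the chain is where the real work of the proof lies.
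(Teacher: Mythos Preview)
The paper does not actually prove this lemma itself: its entire proof is the citation ``See \cite[Lemma~7]{MR2448906}''. So the benchmark you should be comparing against is the argument in Kowalik--Sereni--\v{S}krekovski, not anything reconstructed from the surrounding text.

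What you have written is not a proof but a programme. You never identify what the configuration in \autoref{RC3} is (you speak of ``the relevant low-degree neighbours $w_{1}, w_{2}, \dots$ arranged in the depicted triangles'' without saying how many there are, what their degrees are, or how the triangles overlap), you never name the edge $e$ to delete, and you never specify a single concrete exchange. Every sentence is of the form ``I would then \dots'' or ``this should then force \dots'', and you close by acknowledging that ``verifying that this margin survives every exchange in the chain is where the real work of the proof lies'' --- which is precisely the work you have not done. The saturation/exchange paradigm you describe is indeed how such reducibility lemmas are proved, so the \emph{shape} is right; but absent the configuration data and the explicit recoloring sequence there is nothing here that could be checked, and hence nothing that distinguishes a correct argument from a failed one.

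Your final paragraph, speculating that $\kappa \geq 9$ is needed because ``the palette [is] strictly larger than twice $\lfloor (\kappa-1)/2 \rfloor$ together with the handful of colors blocked by the triangle edges'', is numerology rather than argument: $2\lfloor (\kappa-1)/2 \rfloor$ is already $\kappa-1$ or $\kappa-2$, so this inequality says nothing, and the actual reason the threshold rises in \cite{MR2448906} is tied to the specific degrees appearing in that configuration. If you want a self-contained proof you must reproduce the case analysis from Lemma~7 of \cite{MR2448906}; otherwise, cite it as the paper does.
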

\begin{proof}%
See \cite[Lemma~7]{MR2448906}.
\end{proof}

\begin{lemma}%
If $\kappa \geq 7$, then the graph $G$ contains no configuration in \autoref{RC4}. 
\end{lemma}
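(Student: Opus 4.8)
The plan is to show that the configuration in \autoref{RC4} is reducible, thereby contradicting the $\kappa$-deletion-minimality of $G$. Following the pattern of \autoref{4Fan} and \autoref{FlightFan}, I would first delete a single edge $e$ of the configuration---almost certainly an edge incident with one of the low-degree (that is, $2$- or $3$-) vertices drawn as solid dots---so that $G - e$ is a proper subgraph. By minimality, $G - e$ admits a total coloring $\varphi$ with at most $\kappa$ colors, and the whole task reduces to extending $\varphi$ across $e$ (after recoloring a few low-degree vertices) without creating a conflict.

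Second, I would erase the colors on all the small-degree vertices of the configuration and record, for each uncolored element, its set of forbidden colors. By the remark preceding \autoref{2-connected}, any vertex of degree at most $\Lfloor \frac{\kappa - 1}{2} \Rfloor$ forbids at most $\kappa - 1$ colors and can therefore be recolored at the very end; consequently the genuine constraints come only from the edges and the apex vertices of the triangles. I would then compute the sets $\mathcal{U}_{\varphi}(\cdot)$ at the relevant high-degree vertices and argue, exactly as in \autoref{SumDelta3}, \autoref{OneSmall} and \autoref{TwoTriangles}, that \emph{if} the deleted edge $e$ cannot be colored directly, then the sets $\mathcal{U}_{\varphi}$ of the vertices meeting $e$ must partition $[\kappa]$ in a rigid, disjoint fashion. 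This forced disjointness in turn pins down the colors appearing on the triangle edges of the configuration.

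Third, I would exploit this rigidity: because each such edge sits in one or two triangles (as suggested by the fan-like shape of the reducible configurations), I can perform local color exchanges---swapping the colors on a pair of triangle edges sharing a common apex, as in the closing lines of the proofs of \autoref{2Tau} and \autoref{TwoTriangles}---to release a color for $e$ while leaving the rest of $\varphi$ admissible. Once $e$ is colored, the erased low-degree vertices are recolored greedily using the slack guaranteed above, completing a total $\kappa$-coloring of $G$ and producing the desired contradiction.

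The main obstacle, I expect, is the case where every \emph{direct} extension fails and the $\mathcal{U}_{\varphi}$-sets tile $[\kappa]$ maximally: there the argument must chain together several exchanges across the fan without re-creating a conflict at a previously satisfied edge or vertex, and one must verify that the requisite color (playing the role of $\varphi(w w_{i})$ in the earlier lemmas) genuinely lies outside the relevant union. The hypothesis $\kappa \geq 7$ is precisely what supplies the counting slack---$2\Lfloor \frac{\kappa - 1}{2} \Rfloor \leq \kappa - 1$ together with the degree-sum bound of \autoref{DEDGE}---that makes the final exchange go through.
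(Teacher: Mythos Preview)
The paper does not actually prove this lemma: its entire ``proof'' consists of the citation ``See \cite{MR1464341} or \cite[Lemma 3~(vi)]{MR2448906}.'' So there is no in-paper argument to compare your proposal against, only the standard reducibility arguments carried out in those references.

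Your outline is the correct \emph{template} for such a reducibility proof---delete an edge at a low-degree vertex, invoke minimality to get a total $\kappa$-coloring of the smaller graph, uncolor the small vertices, and then force an extension by swapping colors along triangle edges---and this is indeed the shape of the argument in the cited sources. However, what you have written is a strategy, not a proof. Every step is stated at the level of ``I would delete \emph{some} edge,'' ``I would perform \emph{local} exchanges,'' without ever committing to which edge is removed, which colors are swapped, or why the specific adjacency pattern of \autoref{RC4} guarantees that the chain of exchanges terminates without reintroducing a conflict. The entire difficulty in these lemmas lies precisely in that case analysis: the ``rigid, disjoint'' tiling of $[\kappa]$ you anticipate does occur, and resolving it for \autoref{RC4} (a fan of triangles with a $2$-vertex and several $3$-vertices) requires a concrete sequence of recolorings tailored to that configuration, not a generic appeal to \autoref{2Tau} or \autoref{TwoTriangles}. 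As written, your proposal would apply verbatim to \emph{any} of the configurations in the figures, which shows it cannot yet be a proof of this particular one.

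If you want a self-contained argument rather than a citation, you must specify the configuration (from \cite{MR1464341} it is the fan on a $(\kappa-1)$-vertex with a $2$-neighbor and adjacent $3$-vertices on consecutive triangles), name the deleted edge explicitly, and carry out the exchange case-by-case as in the proof of \autoref{LNet}.
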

\begin{proof}%
See \cite{MR1464341} or \cite[Lemma 3~(vi)]{MR2448906}.
\end{proof}

\begin{lemma}[Shen and Yang \cite{MR2530185}]%
If $\kappa \geq 7$, then the graph $G$ contains no configurations in \autoref{RC5}, \ref{RC6}, \ref{RC7} and \ref{RC8}. 
\end{lemma}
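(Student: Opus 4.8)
The plan is to follow the same scheme that drives the preceding reducibility lemmas: assume for contradiction that $G$ contains one of the configurations in \autoref{RC5}, \ref{RC6}, \ref{RC7} or \ref{RC8}, delete a carefully chosen edge $e$ incident with a small-degree vertex of the configuration, and invoke the minimality of $G$ to obtain a total $\kappa$-coloring $\varphi$ of $G - e$. After erasing the colors on the small vertices involved (each of degree at most $\lfloor\frac{\kappa-1}{2}\rfloor$, so that by the counting remark following \autoref{2-connected} these vertices are never the bottleneck), the task reduces to extending $\varphi$ across the deleted edge and recoloring the erased vertices; failure to do so directly is precisely what will pin down the forced color pattern.

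First I would, for each configuration separately, record the forced structure. Since the configuration is to be reducible, each admissible edge deletion must fail to extend outright, and this failure pins the color sets: I expect to argue, exactly as in \autoref{SumDelta3} and \autoref{OneSmall}, that $[\kappa]$ is the \emph{disjoint} union of $\mathcal{U}_{\varphi}(x)$ and $\mathcal{U}_{\varphi}(y)$ for the two endpoints $x,y$ of $e$. Indeed, if the union were proper we could color $e$ at once, and since $|\mathcal{U}_{\varphi}(x)| + |\mathcal{U}_{\varphi}(y)| = \kappa$ the union must be disjoint. This disjointness, combined with the triangle relations built into the configuration, restricts where each color may appear and, in particular, forces certain colors into the color set of a neighboring small vertex.

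Next I would exploit the triangles of the configuration through edge-color exchanges, as in the proofs of \autoref{2Tau} and \autoref{TwoTriangles}. Swapping the colors on the two triangle edges incident with an apex vertex yields a new partial coloring $\sigma$ whose missing-color pattern at the far small vertex is unchanged, which in turn forces the swapped colors to lie in that vertex's color set. Iterating these swaps over the several triangles present in each of RC5--RC8 should either drive the number of distinct colors seen at some small vertex above $\lfloor\frac{\kappa-1}{2}\rfloor$, contradicting its degree, or exhibit a single color simultaneously available at both endpoints of some edge and at the vertex to be recolored. The hypothesis $\kappa \geq 7$ enters precisely to guarantee that enough colors are present for these exchanges to proceed without collision.

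The hard part will be the bookkeeping in the exchange step rather than any single idea: because RC5--RC8 each contain several small vertices sharing triangles, one must verify that a recoloring chosen to repair one edge does not reintroduce a conflict at an adjacent element. I therefore expect the main obstacle to be selecting, for each configuration, both the right edge to delete and the right order in which to perform the swaps so that all the forced color sets align; once these choices are fixed, each individual extension is routine. Since Shen and Yang \cite{MR2530185} have already carried out this analysis, I would model the case split on their treatment and, in keeping with the theme of this paper, verify that every step remains valid uniformly for all $\kappa \geq 7$ rather than only for a fixed value of $\Delta$.
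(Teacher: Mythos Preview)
The paper does not supply its own proof of this lemma: it is stated with attribution to Shen and Yang \cite{MR2530185} and no argument is given. Your proposal therefore cannot be compared against a proof in the paper, because there is none; the paper simply imports the result by citation.

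That said, the methodology you outline---delete an edge at a small vertex, invoke minimality to get a total $\kappa$-coloring of $G-e$, erase colors on the low-degree vertices, force $[\kappa]=\mathcal{U}_\varphi(x)\sqcup\mathcal{U}_\varphi(y)$, and then perform triangle-edge swaps to drive a contradiction---is exactly the scheme Shen and Yang use, and is the same pattern exhibited in the surrounding lemmas of this paper. Your closing remark that you would ``model the case split on their treatment'' and check validity for all $\kappa\geq 7$ is, in effect, what the paper is asserting by citing \cite{MR2530185} in this $\kappa$-deletion-minimal framework. So your plan is correct and aligned with the intended source; it is just that the paper itself offers nothing beyond the citation for you to compare against.
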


\begin{lemma}[Du \etal \cite{MR2537481}]%
If $\kappa \geq 7$, then the graph $G$ contains no configurations in \autoref{RC9} and \autoref{RC10}.
\end{lemma}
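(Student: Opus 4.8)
The plan is to argue exactly as in the preceding reducibility lemmas of this section, so I would begin by assuming for contradiction that $G$ contains the configuration of \autoref{RC9} (the argument for \autoref{RC10} being parallel). Because $G$ is $\kappa$-deletion-minimal, deleting a carefully chosen edge $e$ incident with a low-degree vertex of the configuration yields a proper subgraph $G - e$ whose total chromatic number is at most $\kappa$; I would fix such a coloring and call it $\varphi$. I would then erase the colors on the low-degree (typically $2$- and $3$-) vertices of the configuration, since by the remark following \autoref{DEDGE} any vertex of degree at most $\lfloor \frac{\kappa-1}{2} \rfloor$ can be recolored last and never obstructs the extension. This reduces the problem to coloring the single edge $e$ together with the few erased vertices.

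Next I would run the standard available-color analysis. Writing $A_{\varphi}(e)$ and $A_{\varphi}(v)$ for the sets of colors still admissible on $e$ and on an erased vertex $v$, the extension succeeds immediately unless these sets are forced to be small and to coincide; a counting argument using $|\mathcal{U}_{\varphi}(x)| \leq \deg_{G}(x)$ shows that failure forces $[\kappa]$ to be the disjoint union of the color sets $\mathcal{U}_{\varphi}$ at the two endpoints of $e$, exactly as in the proofs of \autoref{SumDelta3} and \autoref{2Tau}. In this saturated situation I would exploit the triangles present in the configuration: exchanging the two colors on a pair of triangle edges $ww_{i}$ and $w_{i}w_{j}$ (a length-two Kempe swap) alters the color set at one endpoint without disturbing the rest, and iterating this over the available triangles should free a color for $e$ and then, greedily, for the erased low-degree vertices.

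The main obstacle, as always in these configurations, is the saturated case analysis: when every straightforward recoloring is blocked one must track which colors lie in which $\mathcal{U}_{\varphi}(\cdot)$ and show that the resulting constraints are jointly unsatisfiable. For the denser configurations \autoref{RC9} and \autoref{RC10} this means more endpoints, hence more color sets to control simultaneously, and the swaps on different triangles can interfere with one another. I expect the crux to be choosing the order of the Kempe swaps so that each one strictly enlarges the set of usable colors at the target element while preserving the disjointness established earlier; once a single color is liberated for $e$, the low-degree vertices follow automatically, completing the contradiction.
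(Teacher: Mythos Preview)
The paper does not give its own proof of this lemma; it simply attributes the result to Du, Shen and Wang \cite{MR2537481} and moves on. So there is nothing in the paper to compare your argument against line by line.

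That said, what you have written is not a proof but a template. Every step you describe---delete an edge at a low-degree vertex, erase small vertices, observe that failure to extend forces $[\kappa]$ to split as a disjoint union of two $\mathcal{U}_{\varphi}$'s, then swap colors along triangle edges---applies verbatim to any of Lemmas~\ref{SumDelta3} through~\ref{LNet}. The entire content of a reducibility lemma lies in the part you defer to ``the saturated case analysis'' and ``choosing the order of the Kempe swaps''; for \autoref{RC9} and \autoref{RC10} specifically you have not said which edge $e$ you delete, which vertices you erase, what constraints the disjoint-union condition actually imposes given the particular degrees and adjacencies in those two figures, or why the swaps terminate in a free color rather than cycling. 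Phrases like ``should free a color'' and ``I expect the crux to be'' are admissions that the work has not been done.

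If you want to supply a proof rather than cite \cite{MR2537481}, you must first write down the two configurations explicitly (the degrees of the solid vertices and which triangles are present), then carry out the concrete recoloring for each. Until that is on the page, the proposal is a correct description of the \emph{method} but contains none of the \emph{argument}.
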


\begin{lemma}\label{LNet}%
If $\kappa \geq 7$, then the graph $G$ contains no configuration in \autoref{Net}.
\end{lemma}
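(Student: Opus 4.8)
The plan is to proceed exactly as in the preceding reducibility lemmas of this section: assume for contradiction that $G$ contains the configuration of \autoref{Net}, delete a carefully chosen edge $e$ incident with a low-degree vertex of the configuration, and invoke the $\kappa$-deletion-minimality of $G$ to obtain a total $\kappa$-coloring of $G - e$. After erasing the colors on the uncolored low-degree vertices of the configuration (those of degree at most $\lfloor \frac{\kappa-1}{2} \rfloor$, which by the remark preceding \autoref{2-connected} can always be colored last), I would attempt to extend the coloring across $e$; the entire difficulty lies in showing that this extension is always possible.

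First, I would record the forced color-counting identities. As in \autoref{OneSmall} and \autoref{TwoTriangles}, if $e = ww_{1}$ cannot be colored directly, then $[\kappa]$ is the disjoint union of $\mathcal{U}_{\varphi}(w)$ and $\mathcal{U}_{\varphi}(w_{1})$, and for every other erased low-degree neighbor $w_{j}$ one similarly forces $\mathcal{U}_{\varphi}(w) \cup \mathcal{U}_{\varphi}(w_{j}) = [\kappa]$, hence $\mathcal{U}_{\varphi}(w_{1}) \subseteq \mathcal{U}_{\varphi}(w_{j})$. Second, I would exploit the triangles of the net: for each triangular edge incident with $w$, exchanging the two colors $\varphi(ww_{i})$ and $\varphi(w_{i}w_{j})$ along the triangle yields an alternative partial coloring $\sigma$ in which the same counting applies, thereby forcing additional colors (the former edge-colors $\varphi(ww_{i})$) into the set $\mathcal{U}(w_{j})$. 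Third, counting these forced colors against the degree bound on $w_{j}$ should yield $\deg(w_{j}) > \lfloor \frac{\kappa-1}{2} \rfloor$, or the sharper bound implied by the configuration, contradicting the hypothesis—precisely the mechanism that closes \autoref{TwoTriangles}.

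The main obstacle I expect is the bookkeeping when several triangles of the net share vertices or edges: each Kempe-type swap moves two colors, and overlapping swaps can interfere, so I must verify that after performing all the required exchanges the colors used on $e$ and at the remaining uncolored vertices stay mutually compatible, and that the final reassignment (pushing some $\varphi(ww_{i})$ onto $e$ while freeing a color for $ww_{i}$) produces a genuine total coloring. Once the swaps are shown to be non-interfering, or can be ordered so that earlier swaps are not disturbed by later ones, the remainder is the routine counting built on $|\mathcal{U}_{\varphi}(u)| + |\mathcal{U}_{\varphi}(v)| = \kappa$ that recurs throughout this section.
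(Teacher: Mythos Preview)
Your plan tries to transplant the mechanism of \autoref{TwoTriangles}, but the hypothesis that makes that mechanism run is absent here. In \autoref{TwoTriangles} the deleted edge $ww_{2}$ goes to the vertex lying in \emph{both} triangles, and the tightness $\deg(w)+\deg(w_{2})=\kappa+1$ is precisely what forces $[\kappa]=\mathcal{U}_{\varphi}(w)\sqcup\mathcal{U}_{\varphi}(w_{2})$; that disjointness is why the swaps $\varphi(ww_{i})\leftrightarrow\varphi(w_{i}w_{2})$ are automatically legal (the incoming color $\varphi(w_{i}w_{2})$ is guaranteed absent from $\mathcal{U}_{\varphi}(w)$). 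In the Net configuration the vertex in both triangles is the $3$-vertex $v_{2}$, while $v$ has degree $\kappa-1$, so $\deg(v)+\deg(v_{2})=\kappa+2$: the union is not disjoint and one of the two swaps can collide at $v$. If instead you delete the edge $uv$ to the $2$-vertex (which \emph{does} give the disjoint union $\mathcal{U}_{\varphi}(v)\sqcup\{\varphi(uw)\}=[\kappa]$), then $u$ sits outside the triangles, and now the colors $\varphi(v_{1}v_{2}),\varphi(v_{2}v_{3})$ need not lie in $\mathcal{U}_{\varphi}(u)$, so again the swaps are not both valid. Either way your step three never fires: you cannot force enough colors into $\mathcal{U}(w_{j})$ to exceed a known degree, because the swaps you need are not guaranteed proper colorings.

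The paper deletes $uv$ but then proceeds by a short explicit case analysis rather than a counting contradiction. Setting $\varphi(vv_{1})=1$, $\varphi(vv_{2})=2$, $\varphi(vv_{3})=3$ and $\varphi(uw)=\kappa$, one first checks whether $\kappa\in\{\varphi(v_{1}v_{2}),\varphi(v_{2}v_{3})\}$; if not, recolor $vv_{2}$ with $\kappa$ and $uv$ with $2$. If (by symmetry) $\varphi(v_{1}v_{2})=\kappa$, one splits once more on whether $\varphi(v_{2}v_{3})=1$, and in each branch writes down a concrete reassignment of three to five edges that frees a color for $uv$. The lemma closes by direct extension of the partial coloring, not by deriving an impossible degree; the ``non-interfering swaps then count'' outline you sketched does not survive the first case split.
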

\begin{proof}%
Suppose that the edge $vv_{2}$ is contained in two triangles $vv_{1}v_{2}$ and $vv_{2}v_{3}$. We further assume that $v_{2}$ is a $3$-vertex and $v$ is adjacent to a $2$-vertex $u$. By the minimality of $G$, the graph $G - uv$ has a total coloring with at most $\kappa$ colors. We erase the colors on vertices $u$ and $v_{2}$, and denote the resulting coloring by $\varphi$. Without loss of generality, let $\varphi(vv_{1}) = 1$, $\varphi(vv_{2}) = 2$ and $\varphi(vv_{3}) = 3$. Note that $[\kappa]$ is the disjoint union of $\mathcal{U}_{\varphi}(v)$ and $\{\varphi(uw)\}$, where $w$ is the neighbor of $u$ other than $v$. Without loss of generality, we assume that $\varphi(uw) = \kappa$. If $\kappa \notin \{\varphi(v_{1}v_{2}), \varphi(v_{2}v_{3})\}$, then we recolor $vv_{2}$ with $\kappa$ and $uv$ with $2$. By symmetry, we assume that $\varphi(v_{1}v_{2}) = \kappa$. If $\varphi(v_{2}v_{3}) \neq 1$, then we recolor $vv_{1}, v_{1}v_{2}$ and $uv$ with $\kappa, 1$ and $1$, respectively. If $\varphi(v_{2}v_{3}) = 1$, then we recolor $vv_{1}, v_{1}v_{2}, v_{2}v_{3}, vv_{3}$ and $uv$ with $\kappa, 1, 3, 1$ and $3$, respectively. 
\end{proof}

\begin{lemma}%
If $\kappa \geq 7$, then $G$ contains no $(4, 4, 4)$-triangle. 
\end{lemma}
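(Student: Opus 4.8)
The plan is to assume $G$ contains a $(4,4,4)$-triangle $uvw$ and to argue by cases on $\kappa$. For $\kappa \geq 9$ we have $4 \leq \lfloor\frac{\kappa-1}{2}\rfloor$, so $v$ is a $\lfloor\frac{\kappa-1}{2}\rfloor^{-}$-vertex, and applying \autoref{DEDGE} to the edge $uv$ gives $8 = \deg(u)+\deg(v) \geq \kappa+1 \geq 10$, a contradiction. For $\kappa = 8$, take $\tau = 4$; then $uv$ is a $(\tau,\tau)$-edge lying in the triangle $uvw$, which \autoref{2Tau} forbids. Thus only the case $\kappa = 7$ survives, and that is the crux.

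For $\kappa = 7$, write $a_{1}, a_{2}$ and $b_{1}, b_{2}$ for the neighbours of $u$ and $v$ outside the triangle. Delete $uv$, take a total $7$-colouring $\varphi$ of $G-uv$ by minimality, and erase the colour on $v$. If $\mathcal{U}_{\varphi}(u) \cup \{\varphi(vw),\varphi(vb_{1}),\varphi(vb_{2})\} \neq [\kappa]$ we may colour $uv$ at once, so I may assume equality; since the two sets have sizes $4$ and $3$ this union is in fact disjoint. Because $\deg_{G-uv}(w)=4$, the vertex $w$ misses exactly two colours $M = [\kappa]\setminus\mathcal{U}_{\varphi}(w)$. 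The idea is to recolour $uw$ (respectively $vw$) with a colour of $M$: this is legal at $w$, it releases $\varphi(uw)$ (respectively $\varphi(vw)$), and—because $v$ is uncoloured—the released colour lies outside $\mathcal{U}_{\varphi}(u)$'s complement and can be placed on $uv$ immediately.

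The clean point is that at least one of these two recolourings is legal. Recolouring $uw$ by a colour of $M$ is possible exactly when $M \not\subseteq \{\varphi(u),\varphi(ua_{1}),\varphi(ua_{2})\}$, and recolouring $vw$ exactly when $M \not\subseteq \{\varphi(vb_{1}),\varphi(vb_{2})\}$. The first triple lies in $\mathcal{U}_{\varphi}(u)$ and the second in $\{\varphi(vw),\varphi(vb_{1}),\varphi(vb_{2})\}$, and these two sets are disjoint by the previous paragraph; as $|M|=2$, the set $M$ cannot be contained in both, so one of the moves goes through and the edge $uv$ always receives a colour.

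I expect the genuine obstacle to be the final step: re-establishing a colour at the erased vertex $v$. After the $uw$-move one computes that the colours still available at $v$ are exactly $\{\varphi(ua_{1}),\varphi(ua_{2})\}\setminus\{\varphi(w),\varphi(b_{1}),\varphi(b_{2})\}$, so $v$ can be recoloured unless $\{\varphi(ua_{1}),\varphi(ua_{2})\}\subseteq\{\varphi(w),\varphi(b_{1}),\varphi(b_{2})\}$, and an analogous (slightly less symmetric) condition governs the $vw$-move. My plan is to defeat this tight configuration by playing the two moves against each other—using whichever of $uw$, $vw$ is legal and leaves a colour free at $v$—and, in the residual case where both leave $v$ stuck, to perform one further swap among the edges at $u$ or $v$ before recolouring the triangle edge, or to recognise the forced local picture as one of the reducible configurations already excluded in Section~\ref{Sec:structure}. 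Keeping every recolouring legal at the outer vertices $a_{i}, b_{i}$ throughout, and verifying the endpoint recolouring in each branch, is the delicate part of the $\kappa=7$ analysis.
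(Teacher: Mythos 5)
Your reductions for $\kappa \geq 8$ are correct and pleasantly economical: \autoref{DEDGE} kills any $(4,4)$-edge outright when $\kappa \geq 9$, and \autoref{2Tau} with $\tau = 4$ forbids a $(4,4)$-edge in a triangle when $\kappa = 8$. But the only genuinely hard case, $\kappa = 7$, is not proved, and the gap sits exactly where you suspect it does. Throughout your $\kappa = 7$ analysis you only ever succeed in coloring the \emph{edge} $uv$; the erased \emph{vertex} $v$ still has to be recolored, and since $\deg(v) = 4$ and $\kappa = 7$, an uncolored $v$ can be forbidden up to $2\deg(v) = 8 > \kappa$ colors, so there need not be any color left for it. This already breaks your very first branch: from ``$\mathcal{U}_{\varphi}(u) \cup \{\varphi(vw),\varphi(vb_{1}),\varphi(vb_{2})\} \neq [\kappa]$'' you conclude the case is finished, but it is not — coloring $uv$ leaves $v$ facing possibly eight distinct colors. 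Your own computation confirms the problem: after the $uw$-move the colors available at $v$ are $\{\varphi(ua_{1}),\varphi(ua_{2})\}\setminus\{\varphi(w),\varphi(b_{1}),\varphi(b_{2})\}$, which can be empty, and the concluding plan (``one further swap \dots\ or recognise the forced local picture'') is a hope, not an argument. As written, the lemma is unproved for $\kappa = 7$.

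For comparison, the paper avoids this endgame entirely by making the configuration symmetric from the start: delete all three edges $uv$, $vw$, $uw$, take a total $\kappa$-coloring of the smaller graph, and erase the colors of all three vertices $u, v, w$. Then each of the six uncolored elements (three vertices, three edges) is constrained by at most four colored elements — e.g.\ the edge $uv$ sees only $\varphi(ua_{1}), \varphi(ua_{2}), \varphi(vb_{1}), \varphi(vb_{2})$, and the vertex $u$ sees only its two remaining edge colors and two colored neighbors — so with $\kappa \geq 7$ every element has a list of at least three available colors, and the extension follows from the fact that every triangle is totally $3$-choosable \cite[Theorem 2.2]{MR1617950}. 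The key point your approach misses is that keeping $u$ and $w$ colored is what makes $v$ impossible to finish; uncoloring all three vertices trades two extra uncolored elements for a uniform list-size bound that an off-the-shelf choosability result can absorb. If you want to salvage your line, you would need to either uncolor more of the neighborhood or prove a genuine case analysis for the stuck configuration, at which point the paper's three-edge deletion is strictly simpler.
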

\begin{proof}%
Suppose that $uvw$ is a $(4, 4, 4)$-triangle. The graph $G - \{uv, vw, uw\}$ admits a total coloring with at most $\kappa$ colors. Now, we erase the colors on the vertices $u, v$ and $w$, and denote the resulting coloring by $\varphi$. Note that each element in $\{u, v, w, uv, vw, uw\}$ forbids at most four colors and each element has at least three available colors. Thus, we can extend $\varphi$ to $G$ by using the fact that every triangle is totally $3$-choosable \cite[Theorem 2.2]{MR1617950}. 
\end{proof}

\section{Total coloring of planar and toroidal graphs}\label{Sec:torus}
McDiarmid and S{\'a}nchez-Arroyo \cite{MR1210116} gave a general upper bound in terms of the maximum degree (the graph is not necessarily planar or toroidal). 
\begin{theorem}[\cite{MR1210116}]\label{UpperBound}%
If $G$ is a simple graph with maximum degree $\Delta$, then $\chiup''(G) \leq \frac{7}{5}\Delta + 3$. 
\end{theorem}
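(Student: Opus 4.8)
The plan is to construct the colouring in two stages whose palettes overlap, improving on the trivial disjoint-palette bound $\chiup''(G) \le 2\Delta + 2$ that comes from taking a proper edge-colouring with $\Delta + 1$ colours (Vizing's theorem) together with a proper vertex-colouring on a disjoint set of $\Delta + 1$ colours. I would fix a palette of $\frac{7}{5}\Delta + 3$ colours and partition it as $E \cup R$ with $|E| = \Delta + 1$ and $|R| = \lceil \frac{2}{5}\Delta \rceil + 2$; the identity $(\Delta + 1) + (\frac{2}{5}\Delta + 2) = \frac{7}{5}\Delta + 3$ shows the budget is exactly right (the ceiling only helps). First I would properly edge-colour $G$ using only the colours of $E$, which is possible by Vizing's theorem. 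All remaining difficulty then lies in colouring the vertices using colours of $E \cup R$ only.

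The crucial observation for the vertex stage is that at a vertex $v$ the incident edges carry only $\deg_{G}(v)$ of the colours in $E$, so at least $\Delta + 1 - \deg_{G}(v)$ colours of $E$ are \emph{locally free} at $v$: any of them may be placed on $v$ without clashing with an incident edge. Together with the reservoir, this equips $v$ with an admissible list $L(v) \subseteq E \cup R$ of size $|L(v)| \ge (\Delta + 1 - \deg_{G}(v)) + |R|$, and the problem reduces to a list-colouring of $G$ in which only the requirement that adjacent vertices differ remains. I would then attempt to complete this list-colouring greedily in a carefully chosen vertex order.

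For vertices of moderate degree this is straightforward: colouring greedily, $v$ succeeds as long as $|L(v)|$ exceeds the number of already-coloured neighbours, and since $|L(v)| \ge (\Delta + 1 - \deg_{G}(v)) + |R|$ while at most $\deg_{G}(v)$ neighbours are coloured, success is guaranteed whenever $|R| > 2\deg_{G}(v) - \Delta - 1$; with $|R| = \lceil \frac{2}{5}\Delta \rceil + 2$ this covers every vertex with $\deg_{G}(v)$ up to roughly $\frac{7}{10}\Delta$. The main obstacle is therefore the vertices of degree close to $\Delta$, where locally free colours are scarce and many neighbours compete for them simultaneously, so the naive greedy count fails precisely there.

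To handle these high-degree vertices I would process them first, before the low-degree ones, and—whenever a list runs short—free a colour by a local recolouring: swapping colours along a suitable alternating (Kempe-type) structure among the edges at $v$ and its neighbours either converts a forbidden colour into a locally free one or shifts a neighbour onto a reservoir colour. The delicate point, and the heart of the argument, is to control this process so that the reservoir $R$ never has to absorb more than about $\frac{2}{5}\Delta$ mutually adjacent vertices at once, which is exactly what pins down the constant $\frac{2}{5}$ and the additive term $+3$. This final accounting is the work carried out by McDiarmid and S{\'a}nchez-Arroyo \cite{MR1210116}, whose analysis I would follow for that step.
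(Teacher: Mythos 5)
First, a point of reference: the paper you are working from does not prove this statement at all. Theorem~\ref{UpperBound} is imported verbatim from McDiarmid and S\'anchez-Arroyo \cite{MR1210116} and is used as a black box (its only role in Section~\ref{Sec:torus} is to guarantee $\Delta(G) \geq 7$ for a deletion-minimal counterexample). So the only meaningful comparison is between your sketch and the cited original argument, and there your proposal has a genuine gap located exactly where the theorem lives. The part you actually carry out --- a Vizing edge colouring on a palette $E$ of $\Delta+1$ colours, a disjoint reservoir $R$, lists $L(v)$ of size at least $(\Delta+1-\deg_G(v))+|R|$, and a greedy completion --- correctly handles only vertices of degree up to roughly $\frac{7}{10}\Delta$, as your own computation shows. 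For the remaining vertices, which are the entire content of the theorem, you offer ``Kempe-type swaps'' with no specification of the swap structure, no invariant, no termination argument, and no accounting that produces the constant $\frac{2}{5}$; you then state that this ``final accounting is the work carried out by McDiarmid and S\'anchez-Arroyo, whose analysis I would follow for that step.'' That is circular: the hard case is being discharged by appeal to the very proof you were asked to supply. As written, the proposal is a reduction of the theorem to its hardest case, not a proof.

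It is also worth knowing that the constant $\frac{7}{5}$ has a concrete origin which your single-reservoir framework does not explain, and which suggests the greedy/Kempe route is not the right skeleton. The published argument rests on Kostochka's theorem that the Total Coloring Conjecture holds for multigraphs of maximum degree $5$ \cite{MR1425788}, i.e.\ seven colours suffice there: the $\Delta+1$ matchings of a Vizing edge colouring are grouped into bundles of at most five, each bundle spans a subgraph of maximum degree at most $5$, and each bundle is allotted two fresh colours in addition to its five edge colours, so that the total number of colours is at most
\[
(\Delta+1) + 2\left\lceil \frac{\Delta+1}{5} \right\rceil \;\leq\; \Delta + 1 + \frac{2\Delta}{5} + 2 \;=\; \frac{7}{5}\Delta + 3.
\]
The ratio ``seven colours per five matchings'' is $(r+2)/r$ at $r=5$, the largest degree for which the Total Coloring Conjecture is known; this is where $\frac{7}{5}$ comes from, not from bounding how many mutually adjacent vertices a reservoir of size $\frac{2}{5}\Delta+2$ must absorb. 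If you want a self-contained proof rather than a citation, the step to reconstruct is how the total $7$-colourings of the bundles are combined into one total colouring of $G$; that combination, not a Kempe-chain analysis at high-degree vertices, is the heart of \cite{MR1210116}.
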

\begin{theorem}%
Let $G$ be a planar or toroidal graph with maximum degree at most $\kappa - 1$. If $\kappa \geq 11$, then $\chiup''(G) \leq \kappa$. 
\end{theorem}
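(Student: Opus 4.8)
The plan is to argue by contradiction through a minimal counterexample, now set up in the corrected form~($\diamondsuit$). Suppose the theorem fails and let $G$ be a planar or toroidal graph with $\Delta(G) \le \kappa - 1$, $\chiup''(G) > \kappa$, and $|V(G)| + |E(G)|$ minimum. Then $G$ is precisely a $\kappa$-deletion-minimal graph, so every structural restriction collected in \autoref{Sec:structure} applies to it: it is $2$-connected by \autoref{2-connected}, its minimum degree is at least $\kappa - \Delta(G) + 1 \ge 2$, every vertex of degree at most $\lfloor (\kappa-1)/2\rfloor$ obeys the degree-sum bound of \autoref{DEDGE}, and none of the reducible configurations established through \autoref{LNet} occurs. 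Before any discharging I would dispose of the small-degree case: if $\Delta(G) \le 5$ then, since the Total Coloring Conjecture is settled for $\Delta \le 5$, we get $\chiup''(G) \le \Delta(G) + 2 \le 7 \le \kappa$, a contradiction (one may also invoke \autoref{UpperBound}). Hence $\Delta(G) \ge 6$, a fact I will need later.

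Next I would fix an embedding of $G$ in the plane or the torus and assign the initial charge $\mu(x) = \deg(x) - 4$ to every vertex and every face. By Euler's formula the total charge is $-8$ in the planar case and $0$ in the toroidal case. The discharging rules would move charge from the abundant high-degree vertices (a $(\kappa-1)$-vertex starts with surplus $\kappa - 5 \ge 6$) to the deficient $2$- and $3$-vertices and to the triangular faces, with the amounts tuned so that the worst local configurations still balance. Verifying that every vertex and every face ends with nonnegative charge is the routine but lengthy part, and it rests entirely on the structural lemmas, which control exactly how many small neighbours and how many triangles a vertex can meet: \autoref{DEDGE} forces every $2$-vertex to lie between two $(\kappa-1)$-vertices, \autoref{SumDelta3}, \autoref{OneSmall} and \autoref{TwoTriangles} limit the small neighbours around a vertex that sits in triangles, and \autoref{4Fan}, \autoref{FlightFan}, \autoref{No3*} and \autoref{LNet} forbid the dense fans of $2$-vertices that would otherwise drain a large vertex below zero.

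For the planar case this already finishes the proof: a nonnegative final total charge contradicts the value $-8$, and no further argument is needed.

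The genuine difficulty, and the entire point of the corrected form, is the toroidal case, where the total charge is $0$: nonnegativity alone only yields that \emph{every} final charge is exactly $0$, which is not yet a contradiction. The naive step of declaring the final charge of a $(\kappa-1)$-vertex positive is illegitimate precisely because $\Delta(G)$ may be strictly smaller than $\kappa - 1$. My plan to close this gap is to arrange the rules so that at least one element is \emph{provably} left with positive charge, for every admissible value of $\Delta(G)$ in $\{6,\dots,\kappa-1\}$. One useful device is to let faces only receive charge and never send it, so that any face of degree at least $5$ retains its surplus $\deg(f)-4 \ge 1$; another is to let vertices of degree $5$ or $6$ pay nothing, so that, using \autoref{DEDGE} (a $6$-vertex has all its neighbours of degree $\ge 6$), such a vertex keeps its positive surplus, which together with $\Delta(G) \ge 6$ settles the subcase $\Delta(G)=6$. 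The residual scenario—all faces of degree at most $4$, no $5$- or $6$-vertices, and $\Delta(G) \ge 7$—is where real care is required: here I would pin down a maximum-degree vertex whose outflow is capped below its surplus by the triangle-limiting lemmas, so that it too ends strictly positive. I expect this simultaneous requirement, keeping \emph{all} charges nonnegative while forcing \emph{some} element strictly positive across the whole range of $\Delta(G)$, to be the main obstacle, since it is exactly the point at which the original toroidal arguments break down.
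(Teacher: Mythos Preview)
Your overall plan---minimal counterexample, $\deg-4$ charges via Euler's formula, and for the torus exhibiting an element with strictly positive final charge---is exactly what the paper does. But the two concrete devices you propose for the toroidal endgame do not work as stated, and the paper's actual resolution is both simpler and sidesteps your extra $\Delta(G)=6$ subcase.

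First, you underuse \autoref{UpperBound}. With $\Delta(G)=6$ it already gives $\chiup''(G)\le\tfrac{7}{5}\cdot 6+3<12$, hence $\chiup''(G)\le 11\le\kappa$; so the minimal counterexample satisfies $\Delta(G)\ge 7$, not merely $\ge 6$. This eliminates your $\Delta(G)=6$ case outright. Second, your device ``let $5$- and $6$-vertices pay nothing'' breaks the nonnegativity you need elsewhere: a $(6,6,6)$-triangle would then keep charge $-1$, and for $\kappa\in\{11,12\}$ \autoref{DEDGE} does not forbid such triangles (it only constrains vertices of degree at most $\lfloor(\kappa-1)/2\rfloor\le 5$). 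Likewise ``faces only receive'' is not the route taken---the paper's rule has $5^{+}$-faces send $1$ to incident $2$-vertices, which is what lets the $(\kappa-1)$-vertices survive their $2$-neighbours.

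The paper's actual fix for the torus is cleaner than any case-split on $\Delta(G)$: it designs the rules so that \emph{every} vertex of degree at least $7$ finishes with strictly positive charge, and this is checked degree by degree from $7$ through $\kappa-1$ (the $8$-, $(\kappa-2)$- and $(\kappa-1)$-vertex cases are where the structural lemmas \autoref{OneSmall}, \autoref{No3*}, \autoref{4Fan}, \autoref{FlightFan} and \autoref{LNet} do real work). Since $\Delta(G)\ge 7$, a maximum-degree vertex exists and is strictly positive whatever $\Delta(G)$ actually is, giving the contradiction uniformly for both surfaces.
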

\begin{proof}%
Let $G$ be a counterexample to the theorem with the minimum number of edges. Thus, it is a $\kappa$-deletion-minimal graph, and all the properties of $\kappa$-deletion-minimal graph hold for $G$. By \autoref{UpperBound}, we assume that $\Delta(G) \geq 7$. We also assume that $G$ has been embedded in the corresponding surface. Let $F(G)$ denote the face set of $G$. By \autoref{2-connected}, the graph $G$ is $2$-connected and $\delta(G) \geq 2$. The degree $\deg_{G}(f)$ of a face $f$ is the number of edges with which it is incident, and every cut edge being counted twice. 

\begin{claim}[Kowalik \etal \cite{MR2448906}]\label{One2-vertex}%
Every vertex is adjacent to at most one $2$-vertex.
\end{claim}
From Euler's formula, we have the following equality:
\begin{equation}%
\sum_{v \in V(G)}(\deg(v) - 4) + \sum_{f \in F(G)}(\deg(f) - 4) = -8 \mbox{ or } 0.
\end{equation}

Assign the initial charge of every vertex $v$ to be $\deg(v) - 4$ and the initial charge of every face $f$ to be $\deg(f) - 4$. We design appropriate discharging rules and redistribute charges among vertices and faces, such that the final charge of every vertex and every face is nonnegative; moreover, the final charge of every vertex with maximum degree is positive, which derives a contradiction.

A $2$-vertex is {\em good} if it is incident with a $5^{+}$-face, otherwise, it is {\em bad}.

{\bf The Discharging Rules:}
\begin{enumerate}[label= (R\arabic*)]%
\item\label{Triangle4-} If $f$ is a $(4^{-}, 8^{+}, 8^{+})$-face, then $f$ receives $\frac{1}{2}$ from each incident $8^{+}$-vertex. 
\item\label{Triangle5} If $f$ is a $(5, 7^{+}, 7^{+})$-face, then $f$ receives $\frac{1}{5}$ from the incident $5$-vertex and $\frac{2}{5}$ from each incident $7^{+}$-vertex.  
\item\label{Triangle6+} If $f$ is a $(6^{+}, 6^{+}, 6^{+})$-face, then $f$ receives $\frac{1}{3}$ from each incident vertex. 
\item\label{2-vertex} Every vertex sends $1$ to each adjacent bad $2$-vertex and $\frac{1}{2}$ to each adjacent good $2$-vertex.
\item\label{5+face2} Every $5^{+}$-face sends $1$ to each incident $2$-vertex.
\item\label{3-vertex} Every $3$-vertex receives $\frac{1}{3}$ from each adjacent vertex. 
\end{enumerate}

By \autoref{DEDGE} and the discharging rules, the final charge of every $3$-face is nonnegative. Clearly, the final charge of every $4$-face is zero. 

By \autoref{DEDGE} and \autoref{One2-vertex}, a $\tau$-face is incident with at most $\lfloor \frac{\tau}{3} \rfloor$ vertices of degree two. If $f$ is a $\tau$-face with $\tau \geq 5$, then the final charge is at least $\tau - 4 - \lfloor \frac{\tau}{3} \rfloor \times 1 \geq 0$ by \ref{5+face2}.

{\bf Let $v$ be a $2$-vertex.} If it is bad, then the final charge is $2 - 4 + 2 \times 1 = 0$ by \ref{2-vertex}. If it is good, then the final charge is at least $2 - 4  + 1 + 2 \times \frac{1}{2} = 0$ by \ref{2-vertex}. 

{\bf Let $v$ be a $3$-vertex.} The final charge is $3 - 4 + 3 \times \frac{1}{3} = 0$ by \ref{3-vertex}.

{\bf Let $v$ be a $4$-vertex.} Clearly, the final charge is equal to the initial charge zero.

{\bf Let $v$ be a $5$-vertex.} The final charge is at least $5 - 4 - 5 \times \frac{1}{5} = 0$ by \ref{Triangle5}.

{\bf Let $v$ be a $6$-vertex.} The final charge is at least $6 - 4 - 6 \times \frac{1}{3} = 0$ by \ref{Triangle6+}. 

{\bf Let $v$ be a $7$-vertex.} The final charge is at least $7 - 4 - 7 \times \frac{2}{5} > 0$ by \ref{Triangle5} and \ref{Triangle6+}. 

{\bf Let $v$ be an $8$-vertex.} The final charge is at least $8 - 4 - 8 \times \frac{1}{2} = 0$ by \ref{Triangle4-}, \ref{Triangle5} and \ref{Triangle6+}. Moreover, the final charge equals zero only if $v$ is incident with eight $3$-faces and every incident $3$-face contains a $4$-vertex, but this is impossible by \autoref{DEDGE} and \autoref{OneSmall}. Hence, the final charge of $v$ is positive.  

{\bf Let $v$ be a $t$-vertex with $9 \leq t \leq \kappa - 3$.} The final charge is at least $t - 4 - t \times \frac{1}{2} = \frac{t - 8}{2} > 0$ by \ref{Triangle4-}, \ref{Triangle5} and \ref{Triangle6+}. 

{\bf Let $v$ be a $(\kappa - 2)$-vertex.} Note that $v$ is not adjacent to $2$-vertices. If $v$ is not adjacent to $3$-vertices, then the final charge is at least $(\kappa-2) - 4 - (k-2) \times \frac{1}{2} = \frac{\kappa - 10}{2} > 0$. So we may assume that $v$ is adjacent to a $3$-vertex $u$. Suppose that $uv$ is contained in a triangle $uvw$. By \autoref{OneSmall}, the vertex $v$ is adjacent to exactly one $3$-vertex. Thus, the final charge is at least $(\kappa-2) - 4 - (k-2) \times \frac{1}{2} -\frac{1}{3} = \frac{\kappa - 10}{2} -\frac{1}{3} > 0$. So we may further assume that every edge $uv$ with $u$ is a $3$-vertex is not contained in a triangle. Thus, we have that the number of adjacent $3$-vertices and incident $3$-faces is at most $\kappa - 2$, and the final charge is at least $(\kappa-2) - 4 - (k-3) \times \frac{1}{2} -\frac{1}{3} = \frac{\kappa - 9}{2} -\frac{1}{3} > 0$. 

{\bf Let $v$ be a $(\kappa - 1)$-vertex.} 

A {\em fan} is a subgraph with some (at least one) consecutive $3$-faces such that $vv_{0}v_{1}, vv_{1}v_{2}, \dots, vv_{s-1}v_{s}$ are the boundaries and each of $vv_{0}$ and $vv_{s}$ is incident with a $4^{+}$-face. Let $\epsilon$ be the number of fans at $v$.
\begin{enumerate}[label = (\alph*)]%
\item The vertex $v$ is adjacent to a $2$-vertex. 

By \autoref{One2-vertex}, the vertex $v$ is adjacent to exactly one $2$-vertex $w$. First of all, suppose that $w$ is in a triangle. By \autoref{No3*}, no edge $uv$ with $u$ is a $3$-vertex is incident with $3$-faces. Thus, the number of adjacent $3$-vertices and incident $3$-faces is at most $\kappa - 2$, and thus the final charge is at least $(\kappa-1) - 4 - 1 - (k-2) \times \frac{1}{2} = \frac{\kappa - 10}{2} > 0$.

Next, we may assume that the $2$-vertex $w$ is not in a triangle. By \autoref{LNet}, each edge $uv$ with $u$ is a $3$-vertex is contained in at most one triangle, and thus each fan contains at most two $3$-vertices. Moreover, if a fan contains exactly two $3$-vertices, then the fan contains at least two $3$-faces by \autoref{DEDGE}. Note that $\epsilon$ is at most $\frac{\kappa - 2}{2}$ because the $2$-vertex $w$ is incident with two $4^{+}$-faces.  

Note that the number of adjacent $3$-vertices and incident $3$-faces is at most $(\kappa - 2) + \epsilon$; the number of $4^{+}$-face is at least $\epsilon + 1$. 

If $\kappa \geq 12$, then the final charge of $v$ is at least 
\[
(\kappa - 1) - 4 - 1 - (\kappa - 2 - \epsilon) \times \frac{1}{2} - 2\epsilon \times \frac{1}{3} = \frac{\kappa - 10}{2} - \frac{\epsilon}{6} \geq \frac{5\kappa - 58}{12} > 0.
\]

Now, we consider the case $\kappa = 11$. Note that $\epsilon \leq 4$. If $\epsilon = 4$, then at least three fans only contains one $3$-face and each such $3$-face contains at most one $3$-vertex, and then the final charge of $v$ is at least $10 - 4 - 1 - 5 \times \frac{1}{2} - 5 \times \frac{1}{3} > 0$. If $\epsilon = 3$, then the final charge of $v$ is at least $10 - 4 - 1 - 6 \times \frac{1}{2} - 6 \times \frac{1}{3} = 0$. Moreover, the final charge equals zero only if the local structure is as illustrated in \autoref{Ten} (note that the $2$-vertex $w$ is incident with two $4$-faces), but it is excluded by \autoref{RC4}. Hence, the final charge of $v$ is positive. 

If $\epsilon = 2$, then the final charge is at least $10 - 4 - 1 - 7 \times \frac{1}{2} - 4 \times \frac{1}{3} > 0$. 
If $\epsilon = 1$, then the final charge is at least $10 - 4 - 1 - 8 \times \frac{1}{2} - 2 \times \frac{1}{3} > 0$. If $\epsilon = 0$, then the final charge is at least $10 - 4 - 1 - 9 \times \frac{1}{3} > 0$.  

\begin{figure}%
\centering
\includegraphics{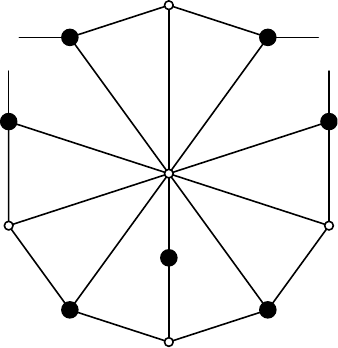}
\caption{}
\label{Ten}
\end{figure}

\item The vertex $v$ is not adjacent to $2$-vertices. 

If $v$ is not incident with $4^{+}$-faces, then according to \autoref{4Fan} and \autoref{FlightFan}, the vertex $v$ is adjacent to at most two $3$-vertices, and then the final charge is at least $(\kappa-1) - 4 - (\kappa-1) \times \frac{1}{2} - 2 \times \frac{1}{3} = \frac{\kappa - 9}{2} - \frac{2}{3} > 0$. So we may assume that $v$ is incident with at least one $4^{+}$-face.  
\begin{enumerate}[label = ($\bullet$)]%
\item Every fan contains at most three $3$-vertices by \autoref{4Fan} and \autoref{FlightFan}. 
\end{enumerate}
If $\epsilon = 0$, then the final charge is at least $(\kappa-1) - 4 - (\kappa-1) \times \frac{1}{3} = \frac{2\kappa - 14}{3} > 0$. If $\epsilon = 1$, then the final charge is at least $(\kappa-1) - 4 - (\kappa-2) \times \frac{1}{2} - 3 \times \frac{1}{3} = \frac{\kappa - 10}{2} > 0$. If $\epsilon = 2$, then according to ($\bullet$), \autoref{4Fan} and \autoref{FlightFan}, the vertex $v$ is adjacent to at most five $3$-vertices on the two fans, and then the final charge is at least $(\kappa-1) - 4 - (\kappa-3) \times \frac{1}{2} - 5 \times \frac{1}{3} = \frac{\kappa - 7}{2} - \frac{5}{3} > 0$. If $\epsilon \geq 3$ and every fan at $v$ contains at most two $3$-vertices, then the final charge of $v$ is at least $(\kappa-1) - 4 - (\kappa - 1 - \epsilon) \times \frac{1}{2} - 2\epsilon \times \frac{1}{3} = \frac{\kappa - 9}{2} - \frac{\epsilon}{6} \geq \frac{\kappa - 9}{2} - \frac{\kappa - 1}{12} > 0$. In the next, we assume that $\epsilon \geq 3$ and there exists a fan containing exactly three $3$-vertices. By \autoref{4Fan} and \autoref{FlightFan}, all the other fans contain at most two $3$-vertices, thus $v$ is adjacent to at most five $3$-vertices on fans. Hence, the final charge of $v$ is at least $(\kappa-1) - 4 - (\kappa - 1 - \epsilon) \times \frac{1}{2} - 5 \times \frac{1}{3} = \frac{\kappa - 9 + \epsilon}{2} - \frac{5}{3} > 0$. 
\end{enumerate}

Now, we have checked that the final charge of every vertex and every face is nonnegative. Let $w$ be a vertex with maximum degree. Clearly, the vertex $w$ is a $7^{+}$-vertex. From the above arguments, we have that $w$ has positive final charge, thus the sum of the final charge of every element is positive, which leads to a contradiction.
\end{proof}

\begin{corollary}%
If $G$ is a planar or toroidal graph with maximum degree at least $10$, then $\chiup''(G) \leq \Delta(G) + 1$. 
\end{corollary}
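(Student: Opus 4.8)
The plan is to argue by contradiction: let $G$ be a counterexample with the fewest edges, so that $G$ is a $\kappa$-deletion-minimal graph and every structural result of \autoref{Sec:structure} applies to it. The decisive feature that makes minimality usable here is that the hypothesis ``maximum degree at most $\kappa-1$'' is deletion-closed — deleting an edge cannot raise the maximum degree, so $G-e$ still satisfies it, and this is precisely the gap that the introduction sets out to repair. Before touching the topology I would dispose of small maximum degree: by \autoref{UpperBound}, if $\Delta(G)\leq 6$ then $\chiup''(G)\leq \frac{7}{5}\cdot 6+3 = 11.4$, whence $\chiup''(G)\leq 11\leq\kappa$ since $\kappa\geq 11$, contradicting that $G$ is a counterexample. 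Thus one may assume $\Delta(G)\geq 7$.

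Next I would embed $G$ in the plane or the torus and set up the discharging framework from Euler's formula, assigning each vertex $v$ the charge $\deg(v)-4$ and each face $f$ the charge $\deg(f)-4$, so that the total charge is $-8$ in the planar case and $0$ in the toroidal case. The aim is to redistribute charge so that every vertex and every face ends with nonnegative charge while at least one element ends strictly positive. The rules should drain surplus out of high-degree vertices and large faces into the deficient small configurations: triangular faces draw fixed amounts from their incident $5^{+}$-vertices (with the amount depending on the triangle's type), every vertex pays a bad or good adjacent $2$-vertex and each adjacent $3$-vertex, and every $5^{+}$-face pays its incident $2$-vertices.

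The routine part is then to check nonnegativity face by face and for vertices of degree up to $\kappa-3$, where crude counting — using \autoref{DEDGE} and \autoref{One2-vertex} to bound how many small neighbours or small faces a vertex can meet — already suffices; the $8$-vertex case additionally needs \autoref{OneSmall} to rule out the extremal all-zero distribution. The genuinely delicate cases are the $(\kappa-2)$- and especially the $(\kappa-1)$-vertices, whose surplus is smallest relative to their potential number of $3$-neighbours and incident $3$-faces. Here I would organize the incident $3$-faces into \emph{fans}, bound the number of $3$-vertices per fan, and split on whether $v$ has a $2$-neighbour and whether that neighbour lies in a triangle; the reducible configurations (\autoref{No3*}, \autoref{LNet}, \autoref{4Fan}, \autoref{FlightFan}, and \autoref{RC4}) are exactly what caps the loss, and the tight threshold $\kappa=11$ must be handled by hand, the worst subcase being $\epsilon=3$ fans, which is rescued only because \autoref{RC4} forbids the extremal local picture.

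The main obstacle — and the real content relative to the erroneous standard argument — is guaranteeing that a strictly positive element actually exists. In the planar case the total charge is $-8<0$, so nonnegativity everywhere is already contradictory; but on the torus the total is exactly $0$, and nonnegative charges summing to $0$ are perfectly consistent, so one genuinely needs some element to be strictly positive. One cannot simply declare the final charge of a $(\kappa-1)$-vertex positive, because $\Delta(G)$ may be strictly smaller than $\kappa-1$ and no such vertex need exist. The remedy, which is the whole point of the revised formulation, is to verify that \emph{every} vertex of degree at least $7$ ends with strictly positive charge; then, whatever the true value of $\Delta(G)\geq 7$ turns out to be, any maximum-degree vertex is strictly positive and supplies the contradiction on both surfaces.
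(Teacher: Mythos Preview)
The paper does not prove this corollary by discharging at all: it is a one-line consequence of the preceding theorem. Given $G$ with $\Delta(G)\geq 10$, set $\kappa=\Delta(G)+1\geq 11$; then $\Delta(G)\leq\kappa-1$, so the theorem yields $\chiup''(G)\leq\kappa=\Delta(G)+1$.

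Your outline is a faithful sketch of the paper's proof of the \emph{theorem}, and in that role it is correct. But as a direct proof of the corollary it falls into precisely the trap the introduction describes. You take $G$ to be a minimal counterexample to the corollary and assert that $G$ is $\kappa$-deletion-minimal with $\kappa=\Delta(G)+1$. That does not follow: the corollary's hypothesis is ``$\Delta(G)\geq 10$'', which is of form~$(\ast)$, not form~$(\diamondsuit)$, and is \emph{not} deletion-closed. If a proper subgraph $H$ of $G$ has $\Delta(H)\leq 9$, edge-minimality of $G$ among counterexamples to the corollary tells you nothing about $\chiup''(H)$, and for toroidal graphs no external bound $\chiup''(H)\leq 11$ is available (\autoref{UpperBound} only gives $\chiup''(H)\leq 15$). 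So the lemmas of \autoref{Sec:structure} cannot be invoked. Your second sentence correctly names ``$\Delta\leq\kappa-1$'' as the deletion-closed hypothesis that makes minimality work, but that is the hypothesis of the theorem, not of the corollary you are asked to prove; the right logical order is to fix $\kappa\geq 11$ first, take a minimal counterexample to the theorem for that $\kappa$, run your discharging argument to establish the theorem, and only then deduce the corollary by the one-line reduction above.
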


\begin{corollary}%
If $G$ is a planar or toroidal graph with maximum degree at least $9$, then $\chiup''(G) \leq \Delta(G) + 2$. 
\end{corollary}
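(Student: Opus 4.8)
The plan is to argue by contradiction through a minimal counterexample. Suppose the theorem fails and let $G$ be a counterexample with the fewest edges; then $G$ is $\kappa$-deletion-minimal, so every structural lemma of Section~\ref{Sec:structure} applies to it. Since $\chiup''(G) \leq \frac{7}{5}\Delta + 3$ by \autoref{UpperBound}, a graph with $\Delta(G) \leq 6$ would satisfy $\chiup''(G) \leq 11 \leq \kappa$, contradicting that $G$ is a counterexample; hence I may assume $\Delta(G) \geq 7$. Embedding $G$ in the plane or the torus and writing $V(G)$, $F(G)$ for its vertices and faces, Euler's formula gives $\sum_{v}(\deg(v)-4) + \sum_{f}(\deg(f)-4) \in \{-8, 0\}$, the value $-8$ in the planar case and $0$ in the toroidal case. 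This is precisely where both surfaces must be treated together: after assigning charge $\deg(x)-4$ to every vertex and face, it no longer suffices to make all final charges nonnegative (which contradicts $-8$ but not $0$); I must exhibit at least one element of \emph{strictly} positive final charge. The natural candidate is a vertex $w$ of maximum degree, which is a $7^{+}$-vertex, and showing that $w$ ends positive forces the total to be positive, contradicting both $-8$ and $0$ at once.

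With the rules (R1)--(R6) fixed, I would first dispose of the routine elements. By \autoref{DEDGE} and \autoref{One2-vertex} every $3$-face is a $(4^{-},8^{+},8^{+})$-, $(5,7^{+},7^{+})$-, or $(6^{+},6^{+},6^{+})$-face and so ends with charge $0$; every $4$-face keeps charge $0$; and a $\tau$-face with $\tau \geq 5$ is incident with at most $\lfloor \tau/3 \rfloor$ vertices of degree two, hence ends nonnegative. On the vertex side, the $2$-, $3$-, $4$-, $5$-, and $6$-vertices each balance to $0$ by direct computation, while a $7$-vertex ends with at least $7 - 4 - 7 \cdot \frac{2}{5} = \frac{1}{5} > 0$. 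For larger degrees I would use \autoref{DEDGE} to note that a vertex of degree below $\kappa-1$ has no $2$-neighbor and, below $\kappa-2$, no $3$-neighbor, so it spends at most $\frac{1}{2}$ per incident face; this yields $\frac{t-8}{2} > 0$ for $9 \leq t \leq \kappa-3$, and for the $8$-vertex the bound $8 - 4 - 8 \cdot \frac{1}{2} = 0$ is attained only when all eight incident triangles carry a $4$-vertex, a configuration excluded by \autoref{DEDGE} and \autoref{OneSmall}, so the $8$-vertex is strictly positive.

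The $(\kappa-2)$-vertex is the first place where $3$-neighbors reappear; here I would split on whether the $3$-neighbor lies in a triangle, using \autoref{OneSmall} to cap the number of $3$-neighbors, together with the counting observation that each triangle-free $3$-neighbor edge is flanked by two $4^{+}$-faces, so the number of adjacent $3$-vertices plus incident $3$-faces is at most $\kappa-2$; this keeps the charge sent strictly below the surplus $(\kappa-6)$.

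The genuinely hard case, and the main obstacle, is the $(\kappa-1)$-vertex $v$, the only vertex that may be adjacent to a $2$-vertex (and then, by \autoref{One2-vertex}, to exactly one). I would organize the incident triangles into \emph{fans} and bound fan by fan how many $3$-neighbors each fan can carry: \autoref{4Fan} and \autoref{FlightFan} cap this at three in general, \autoref{LNet} and \autoref{No3*} tighten it to two when a $2$-neighbor is present, and \autoref{DEDGE} forces any fan with two $3$-neighbors to contain two $3$-faces. The delicate accounting then balances the charge $v$ sends to its $2$-neighbor (R4), to its $3$-neighbors (R6), and to its incident triangles against the number $\epsilon$ of fans and the at least $\epsilon+1$ incident $4^{+}$-faces. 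The tightest subcase is $\kappa = 11$ with $\epsilon = 3$ and a $2$-neighbor, where the estimate is exactly $0$ and positivity is recovered only by invoking \autoref{RC4} to rule out the extremal configuration of \autoref{Ten}. Once every vertex and face is shown nonnegative and the maximum-degree vertex strictly positive, the contradiction with Euler's formula closes the argument uniformly for the plane and the torus.
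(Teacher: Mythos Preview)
The paper does not give a separate proof of this corollary: it is read off from the main theorem by taking $\kappa=\Delta(G)+2\geq 11$, so that $\Delta(G)\leq\kappa-1$ and the theorem yields $\chiup''(G)\leq\kappa=\Delta(G)+2$.

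Your proposal instead reproduces the entire discharging proof of the main theorem. The discharging content is faithful, but the set-up contains precisely the gap the paper is written to expose. You write ``let $G$ be a counterexample with the fewest edges; then $G$ is $\kappa$-deletion-minimal''. A counterexample to the \emph{corollary} carries the hypothesis $\Delta(G)\geq 9$, which is not deletion-closed: for a proper subgraph $H\subset G$ with $\Delta(H)\leq 8$ the corollary says nothing, so edge-minimality among counterexamples to the corollary does \emph{not} give $\chiup''(G-e)\leq\kappa$, and hence does not make $G$ $\kappa$-deletion-minimal. Your own sentence ``a graph with $\Delta(G)\leq 6$ would satisfy $\chiup''(G)\leq 11\leq\kappa$'' betrays the slip: it is vacuous for a counterexample to the corollary (where $\Delta\geq 9$ is assumed) and only makes sense if $G$ is a minimal counterexample to the statement in form~($\diamondsuit$) for a fixed $\kappa$. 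You also never define $\kappa$.

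The fix is either to invoke the main theorem in one line, or, if you insist on redoing the discharging, to first pick a putative counterexample $G_{0}$ to the corollary, set $\kappa=\Delta(G_{0})+2\geq 11$, and then choose $G$ edge-minimal among \emph{all} planar/toroidal graphs with $\Delta\leq\kappa-1$ and $\chiup''>\kappa$ (this is form~($\diamondsuit$), and $G$ need not equal $G_{0}$). That $G$ is genuinely $\kappa$-deletion-minimal, and your discharging outline then goes through verbatim.
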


\end{document}